\newcommand{\wbeta}{\underline{\beta}}
\newcommand{\wb}{\underline{b}}
\newcommand{\dcup}{\stackrel{.}{\cup}}
\newtheorem{theorem}{Theorem}[section]
\newtheorem{proposition}[theorem]{Proposition}
\newtheorem{remark}[theorem]{Remark}
\newtheorem{definition}[theorem]{Definition}
\newenvironment{proof}{\begin{trivlist}
    \item[\hskip\labelsep{\bf Proof.}]}{$\hfill\Box$\end{trivlist}}
\theoremstyle{plain} \theorembodyfont{\rmfamily}
\numberwithin{equation}{section}
\numberwithin{figure}{section}
\newcommand{\bsvarrho}{{\boldsymbol{\varrho}}}
\newcommand{\bbR}{\mathbb{R}}
\newcommand{\R}{\mathbb{R}}
\newcommand{\N}{\mathbb{N}}
\newcommand{\bbP}{\mathbb{P}}
\newcommand{\calG}{\mathcal{G}}
\newcommand{\calN}{\mathcal{N}}
\newcommand{\calNN}{\mathcal{NN}}
\newcommand{\calT}{\mathcal{T}}
\newcommand{\calV}{\mathcal{V}}
\newcommand{\NN}[1]{\left\|#1\right\|}
\newcommand{\norm}[2][]{\| #2 \|_{#1}}
\newcommand{\snorm}[2][]{| #2 |_{#1}}
\newcommand{\ceil}[1]{\lceil #1 \rceil}
\newcommand{\setc}[2]{\left\{#1\, :\,#2\right\}}
\newcommand{\domain}{{\mathrm{D}}}
\newcommand{\Sop}[1]{\operatorname{S}_{#1}}
\newcommand{\So}{\Sop{1}}
\newcommand{\be}{\begin{equation}}
\newcommand{\ee}{\end{equation}}
\newcommand{\bea}{\begin{eqnarray}}
\newcommand{\eea}{\end{eqnarray}}
\newcommand{\beas}{\begin{eqnarray*}}
\newcommand{\eeas}{\end{eqnarray*}}
\newcommand{\relu}{{\rho}}
\newcommand{\realiz}[1]{{\rm R}(#1)} 
\newcommand{\depth}{L}
\newcommand{\size}{M}
\newcommand{\Parallel}[1]{{\rm P}(#1)} 
\newcommand{\FParallel}[1]{{\rm FP}(#1)}
\newcommand{\Parallelc}[1]{{\rm P}\left(#1\right)} 
\newcommand{\FParallelc}[1]{{\rm FP}\left(#1\right)}
\newcommand{\sconc}{\odot}
\newcommand{\sizefirst}{\size_{\operatorname{in}}}
\newcommand{\sizelast}{\size_{\operatorname{out}}}
\newcommand{\inradius}{r}
\DeclareMathOperator{\conv}{conv}
\DeclareMathOperator{\interior}{int}
\newcommand{\Cheb}{{Cheby\v{s}ev} }
\newcommand{\Id}{{\rm Id}}
\newcommand{\ReLUtwo}{ReLU$^2$ }
\newcommand{\ReLUtwonospace}{ReLU$^2$}
\newcommand{\ReLU}{ReLU }
\newcommand{\ReLUnospace}{ReLU}
\newcommand{\BiSU}{BiSU }
\newcommand{\BiSUnospace}{BiSU}
\renewcommand{\S}{\mathcal S}
\newcommand{\M}{\mathcal M}
\newcommand{\LLL}{\mathfrak L}
\newcommand{\MMM}{\mathfrak M}
\newcommand{\OOO}{\mathfrak O}
\newcommand{\TTT}{\mathfrak T}
\newcommand{\dist}{\mathrm{dist}}
\newcommand{\OO}{\domain_0}
\newcommand{\oc}{\omega_c}
\begin{document}
\bibliographystyle{abbrv}
\title{
Exponential Expressivity of \ReLUnospace$^k$ Neural Networks
\\
on Gevrey Classes with Point Singularities
}
\author[$\dagger$]{Joost A. A. Opschoor} 
\author[$\dagger$]{Christoph Schwab}
\affil[$\dagger$]{\footnotesize Seminar for Applied Mathematics, ETH
  Z\"{u}rich, R\"{a}mistrasse 101, CH--8092 Z\"urich, Switzerland.
    joost.opschoor@sam.math.ethz.ch,\;
    christoph.schwab@sam.math.ethz.ch}
\maketitle
\date{}
\begin{abstract}

We analyze deep Neural Network emulation rates 
of smooth functions with point singularities in
bounded, polytopal domains $\domain \subset \R^d$,
$d=2,3$. 
We prove exponential emulation rates 
in Sobolev spaces in terms of the 
number of neurons and in terms of the number of nonzero coefficients
for Gevrey-regular solution classes defined in terms 
of weighted Sobolev scales in $\domain$, 
comprising the countably-normed spaces of I.M. Babu\v{s}ka and B.Q. Guo.

As intermediate result,
we prove that continuous, piecewise polynomial
high order (``$p$-version'') finite elements
with elementwise polynomial degree $p\in\N$
on arbitrary, regular, simplicial partitions of polyhedral domains 
$\domain \subset \R^d$, $d\geq 2$
can be \emph{exactly emulated} by 
neural networks combining \ReLU and \ReLUtwo activations.
On shape-regular, simplicial partitions of polytopal domains $\domain$, 
both the number of neurons and the number of nonzero parameters 
are proportional to the number of degrees of freedom of the finite element space,
in particular for the $hp$-Finite Element Method of I.M. Babu\v{s}ka and B.Q. Guo.
\end{abstract}

\noindent
{\bf Keywords:}
Neural Networks, 
$hp$-Finite Element Methods, 
Singularities,
Gevrey Regularity,
Exponential Convergence

\noindent
{\bf Subject Classification:}
65N30, 
41A25 

\section{Introduction}
\label{sec:Intro}

Recent years have seen the emergence of machine learning 
in scientific computing. 
One key component are \emph{Deep Neural Network} 
(DNN for short) based numerical approximations of functions
and operators. 
DNNs appear to impinge on
nearly all applications of computation in science 
and engineering, including the numerical solution of 
Partial Differential Equations (PDEs).
It is therefore of interest to explore, mathematically, the 
consequences of this development for established methodologies
in numerical analysis and scientific computing. 
The present paper addresses DNNs in the context of
the $hp$-version of the Finite Element Method,
as initiated by I.M. Babu\v{s}ka and B.A. Szab\'o \cite{BabSzBook} and
coworkers.
It will, in particular, 
develop \emph{exact DNN emulations of $hp$-finite element (FE for short)
spaces in polygonal and polyhedral domains $\domain$}. 
As a natural consequence, 
\emph{existing exponential convergence rate results} 
for $hp$-FE approximation of elliptic boundary value problems
(e.g. \cite{ApproxhpFE,BabGuoCurved,Schwabphp,MelenkBook,BabSzBook,MelSch98} and the references there)
will imply corresponding DNN approximation error bounds.
\subsection{Previous Work}
\label{sec:PreWrk}

In a precursor to this manuscript, \cite{OPS2020}, 
we constructed, in particular,
deep ReLU neural network (NN for short)
emulations of univariate $hp$-FE spaces (referred to also as
        ``variable degree, variable knot splines''). 
In \cite{OS2023}, an improved NN architecture to express high-order 
polynomials by \ReLU NNs based on the emulation of \Cheb polynomials was
developed, implying improved stability and 
expression rate bounds in some of the results in \cite{OPS2020}.

In \cite{HLXZ2020}, \ReLU DNN emulations of first order, 
Lagrangean FE spaces (``Courant Finite Elements'') 
on particular, regular triangulations of 
polytopal domains where constructed, and first bounds on size and depth 
of the NNs were proved. 
Admissible simplicial partitions in  \cite{HLXZ2020} 
were subject to certain patch conditions which were essential 
in the DNN architecture. 

The first \ReLU NN emulation of first order, Lagrangean FE spaces
on general regular simplicial partitions in any space dimension 
were constructed in \cite[Sec.~4]{LODSZ22_991}.
The present results build, in part, on the constructions in 
\cite{LODSZ22_991}.

Accordingly, 
the recent \cite{HX2023} is closely related to the present work.
There, using the construction in \cite[Sec.~4]{LODSZ22_991},
DNN emulations of arbitrary order, Lagrangean FE spaces on 
general, regular simplicial partitions are developed,
via DNNs which are \ReLU and \ReLUtwo activated. 
The constructions in \cite{HX2023} furnish exact DNN representations
of Lagrangean FE spaces on regular, simplicial triangulations of $\domain$, 
by emulating in the feature space of the DNNs the nodal, Lagrangian
basis of arbitrary, fixed polynomial degree $p\geq 1$.
\subsubsection{Finite Elements }
\label{sec:PreFinElts}

Founded in the mid-20th century,
Finite Element 
discretizations are nowadays
well-established discretization methodologies, 
in particular for elliptic and parabolic PDEs.
Their mathematical underpinning 
(see, e.g., the text \cite{EGI} and the references there) has, 
to some extent, an accepted
set of terminology and notation, which we recapitulate here.

On a polytopal domain $\domain$ of dimension $d\geq 2$,
we consider regular, simplicial partitions $\mathcal{T}$.
We start from the \ReLU neural network 
emulation of continuous, piecewise linear functions,
forming the classical ``Courant Finite Elements'',
whose space is denoted by $\So(\mathcal{T},\domain)$,
and consider the exact emulation of high-order, Lagrangean
Finite Element
spaces, 
for polynomial degree $p\in\N = \{ 1,2,3,...\}$ 
defined as
\begin{align*}
\Sop{p}(\calT,\domain) 
	= &\, \{ v \in C^0(\domain) : v|_K \in \bbP_p(K) \text{ for all } K\in\calT \}
	.
\end{align*}
We recall from \cite[Sec. 1.4.1]{LODSZ22_991}
notation for the used partitions.
For
$k\in\{0,\ldots,d\}$ we define a $ k $-simplex $ K $ by
$ K = \conv(\{a_0,\ldots, a_k\}) \subset \R^d $, for some
$ a_0,\ldots,a_k \in \R^d $ which do not all lie in one affine
subspace of dimension $k-1$, and where
\begin{equation*}
\conv (Y) 
:= 
\setc{x = \sum_{y \in Y}\lambda_y y}{ \lambda_y\geq0 \, \text{ and }\, \sum_{y \in Y} \lambda_y = 1 }
\end{equation*}
denotes the closed convex hull.\footnote{
In \cite{LODSZ22_991}, 
$\conv$ denotes the \emph{open convex hull}.
As a result, in \cite{LODSZ22_991} simplices are open by definition.
Here, simplices are closed by definition.}
By $\snorm{K}$ we denote the $k$-dimensional Lebesgue measure of 
the $k$-simplex $K$.
We consider a 
simplicial partition $ \calT $ of a polytopal, 
bounded domain $ \domain \subset \R^d$ into $d$-simplices,
i.e. 
$ \overline{\domain} = \bigcup_{K\in \calT} K $ and
$ \interior K\cap \interior K' = \emptyset$, for all $ K\neq K' $.
We assume that $ \calT $ is a \emph{regular} partition, 
i.e. for all distinct $K, K' \in\calT$ 
it holds that $K\cap K'$ 
is a $k$-subsimplex of $K$ 
for some $k\in\{0,\ldots,d-1\}$.
I.e., 
there exist
$a_0,\ldots,a_d \in \overline\domain$ 
such that 
$K = \conv(\{a_0,\ldots,a_d\})$
and
$K\cap K' = \conv(\{a_0,\ldots,a_k\})$.
 
Recall that the 
\emph{shape-regularity constant} 
$\kappa_{\rm sh}(\calT)$
of a simplicial partition $\calT$ of $\domain$ 
is
$ \kappa_{\rm sh} 
:=
\max_{K\in \calT}\tfrac{h_K }{\inradius_K} > 1 $.  
Here
$h_K := \operatorname{diam}(K)$ 
and 
$\inradius_K$ is the radius of the
largest ball contained in $K$.
Let $ \calV $ be the set of vertices of $ \calT $.  
For all $i \in \calV$ we denote by
$s(i) := \snorm{ \{ K \in \calT : i \in K \} }$
the number of elements of $\calT$ sharing the vertex $i$,
and define
$\mathfrak{s}(\calV) := \max_{i\in\calV} s(i)$.
Throughout, we will use the notation $\snorm{S}$ 
for the cardinality of a finite set $S$.

\begin{remark}
\label{rem:mathfraks}
The constant $\mathfrak{s}(\calV)$ 
can be bounded in terms of the 
shape regularity constant $\kappa_{\rm sh}(\calT)$
(see \cite[Rem. 11.5 and Prop. 11.6]{EGI}, 
which generalize verbatim to dimension $d>3$).
\end{remark}
\subsubsection{Exact NN Emulation of Lagrangean, Nodal Finite Elements}
\label{sec:PreExactEmul}
For a polytopal domain $\domain$ and a regular, simplicial partition $\calT$,
an exact emulation of $\So(\mathcal{T},\domain)$
by ReLU NNs is provided in \cite{ABMM2016},
but without efficient bounds on the NN depth and size.
Such bounds were first obtained
in \cite[Sec. 3]{HLXZ2020},
under the assumption that the mesh has ``convex patches'',
i.e., 
it was assumed that for all $i \in \calV$ 
the set $\{ K \in \calT : i \in K \}$ is convex.
It was shown that 
functions in $\So(\mathcal{T},\domain)$ can be emulated exactly by a ReLU NN
of depth independent of $N = \dim( \So(\mathcal{T},\domain) )$
and size growing at most linearly in $N$,
with constants depending on $d$ and the shape regularity of the mesh.
By a geometrical construction 
in which possibly nonconvex patches 
are re-written as the finite union of convex patches,
the results from \cite[Sec. 3]{HLXZ2020}
were extended to general regular, simplicial meshes,
without assuming convexity of patches,
in \cite{LODSZ22_991}.
There, 
it was shown that also without assuming convexity of patches
the depth is independent of $N = \dim( \So(\mathcal{T},\domain) )$
and the size grows at most linearly in $N$,
with constants depending on $d$ and the shape regularity.

For the exact emulation of high order finite elements
$\Sop{p}(\calT,\domain)$ for $p\in\N$,
a first result using \ReLUnospace, \ReLUtwo and \emph{binary step unit}
(\emph{\BiSUnospace}) activations
was given in \cite[Sec. 7.1]{LODSZ22_991}.
The \ReLUnospace, \ReLUtwo and \BiSU activation functions $\R\to\R$ 
are defined by 
$x\mapsto \max\{0,x\}$,
$x\mapsto \max\{0,x\}^2$
and
$x \mapsto 1$ for $x>0$ and $x\mapsto 0$ for $x\leq 0$,
respectively.
The networks constructed in \cite[Sec. 7.1]{LODSZ22_991}
have depth bounded by $C d\log_2(p+1)$
and size bounded by $C \snorm\calT (p+1)^d$,
which is proportional to the number of degrees of freedom
(i.e. the dimension) of the FE space.
Although this emulation result is efficient, 
it is not completely satisfactory
as the use of discontinuous \BiSU activations
for the emulation of continuous FE basis
functions could be considered somewhat unnatural.
The use of \ReLUtwo activations 
for the exact emulation of high order finite elements is natural, 
as it allows the exact emulation of products (cf. Prop. \ref{prop:nnprod}).

The exact emulation of $\Sop{p}(\calT,\domain)$
by NNs with only \ReLU and \ReLUtwo activations 
was first given in \cite{HX2023}.
The main insight is that on an element $K \in \calT$,
the ``hat'' basis functions of $\So(\mathcal{T},\domain)$,
which equal $1$ in one vertex $i\in\calV$ and vanish in all other vertices in $\calV$,
coincide with the barycentric coordinates on $K$.
Elements of $\Sop{p}(\calT,\domain)$ 
can therefore be written as sums of products of ``hat'' basis functions.
In \cite{HX2023}, the NNs from \cite{LODSZ22_991}
which exactly emulate $\So(\mathcal{T},\domain)$
are combined with product subnetworks
to obtain an exact emulation of elements in $\Sop{p}(\calT,\domain)$.
The size of the networks constructed in \cite{HX2023}
is larger than the number of degrees of freedom 
$\dim(\Sop{p}(\calT,\domain))$,
which is suboptimal, as we will show (see Rem.~\ref{rmk:pbds}).
A detailed discussion of the results in \cite{HX2023}
and the complexity of the NNs constructed there
is the topic of Sect. \ref{sec:previoushofem}.
\subsubsection{NN Emulation of $hp$-Finite Elements}
\label{sec:PrehpNN}

The literature on 
exponential convergence of NN approximations of functions with point singularities
based on the NN approximation of $hp$-Finite Elements
goes back to \cite{OPS2020}, 
where it is shown that there exist 
exponentially convergent ReLU NN approximations 
of univariate weighted Gevrey regular functions
(see Sect. \ref{sec:wgtSobSpc} for a definition of weighted Gevrey regular functions).
These functions, which are defined on a bounded interval,
are smooth everywhere except in a finite set of singular points.
In that work, only \ReLU activations were used,
i.e. multiplications could not be realized exactly 
(as \ReLU NNs realize continuous, piecewise linear functions)
and were approximated using networks introduced in \cite{yarotsky}
whose size depends logarithmically on their accuracy.
Approximating in this way 
all products which are needed for the approximation of high order finite elements
results in a NN whose size grows more quickly with the polynomial degree
than the number of degrees of freedom 
of the $hp$-Finite Elements which it approximates.

For $d=2,3$, 
for weighted analytic functions on polygons with point singularities 
and on polyhedra with point- and edge singularities,
the existence of exponentially convergent \ReLU NN approximations 
was shown in \cite{MOPS23}.
Again, multiplications were approximated by \ReLU subnetworks,
leading to an inexact approximation of $hp$-Finite Elements
and a network size that is larger than the number of degrees of freedom.
For the approximation of weighted analytic functions on $(0,1)^d$,
the networks constructed in \cite{MOPS23}
are based on tensor product $hp$-Finite Elements 
on tensor product meshes with rectangular elements.
To approximate weighted analytic functions on domains
which do not have a tensor product structure,
multiple such approximations are combined using a partition of unity.

While in \cite{MOPS23}, exponentially convergent networks are shown to exist,
it is not clear how network parameters (weights and biases)
that realize this exponential convergence
can be computed based on a finite number of function evaluations.
This is the topic of \cite{JOdiss}.
Exponentially convergent \ReLU NNs are constructed
which approximate weighted analytic functions on polygons
that have point singularities in the vertices of the domain.
The NN construction in \cite{JOdiss} is similar to, 
but different from that in \cite{MOPS23}.
The main difference is that local polynomial approximations
on rectangular elements in tensor product partitions of $(0,1)^d$,
which make up the $hp$-Finite Elements on $(0,1)^d$,
are computed by nodal interpolation,
rather than the tensor product $H^1$ projection used in \cite{MOPS23}.
This means that the network parameters can be computed explicitly
based on a finite number of queries of the approximated function,
whose number grows at most polylogarithmically with the accuracy.
Also, it means that we can use 
existing bounds on the Lebesgue constants for polynomial interpolation
to show stability of the NN approximations.

\subsection{Contributions}
\label{sec:Contr}
%
In our main result, Thm. \ref{thm:nngevrey},
we establish, for functions from weighted analytic 
and weighted Gevrey regular spaces
with point singularities in bounded, polytopal domains $\domain$ 
in Euclidean space of dimension $d=2$ or $d=3$, 
\emph{exponential expressivity bounds of certain deep neural networks}.
The NNs under consideration are deep feedforward NNs 
which encode so-called $hp$ Finite Element approximations
on shape-regular, simplicial partitions of $\domain$ with geometric
refinement(s) towards the singular support of the function to be
emulated.
We consider singular support sets $\S\subset \partial\domain$
which are contained in the boundary.\footnote{The present analysis
also covers point singularities in the interior of $\domain$,
when they are located at a vertex of the used 
regular, simplicial triangulation of the domain.}

Compared to \cite{MOPS23,JOdiss},
our construction combines both \ReLU and \ReLUtwo activations,
rather than only \ReLU activations.
This means that products of real numbers can be emulated exactly
and allows us to obtain a NN size which is 
proportional to the number of degrees of freedom
in the Finite-Element space.
The use of two different activation functions throughout our networks
means that the used architectures are slightly more complex than pure \ReLU networks,
as for every node in the network, 
the choice of activation function has to be specified.
As in \cite{YWYX2023}, 
in each layer, the same activation is applied in all positions in that layer.
The network consists of several layers with \ReLU activation, 
followed by several layers with \ReLUtwo activation. 
The ReLU layers emulate continuous, piecewise linear hat functions on a regular, simplicial triangulation. 
The number of such layers depends on the shape regularity of the triangulation. 
It depends logarithmically on the dimension $d$ of the domain, but it is independent of the polynomial degree $p$.
The number or \ReLUtwo layers is independent of the triangulation and its shape regularity, 
but depends logarithmically on $p$ and on $d$.

Our construction works with arbitrary regular, simplicial partitions,
which makes it simpler than 
the partition of unity-$hp$-constructions in \cite{MOPS23,JOdiss}.
We do not provide exponential rate bounds to approximate
edge singularities in three space dimensions,
but extend previous convergence rate bounds
which only hold for weighted analytic functions 
to weighted Gevrey classes with isolated point singularities.
Isolated point
singularities arise in space dimension $d=2$ as corner singularities in
solutions to elliptic BVPs with analytic data in polygons \cite{BabGuoRegI,GB1993,GS2006},
and in space dimension $d=3$ in a number of applications
\cite{CCM2010,MadMar19,FHHS2009,SGKM1995}.

Our networks are constructed by emulating 
continuous, piecewise polynomial nodal Lagrangean basis functions 
(see Rem. \ref{rem:nodalinterpolation}).
Therefore, 
as in \cite{HOS2022,JOdiss},
nodal interpolation could be used 
to construct NN approximations and explicitly compute their parameters (weights and biases),
provided that the function to be approximated can be queried.
We could compute the parameters 
based on only a finite number of function evaluations,
whose number equals the number of degrees of freedom
of the underlying $hp$ Finite Elements
and grows polylogarithmically with the accuracy.

The NNs constructed in Sect. \ref{sec:efficienthofem},
which exactly emulate high order finite elements,
are of independent interest, 
also beyond the current context of $hp$-Finite Elements.
We exactly emulate any continuous, piecewise polynomial function
of degree $p\in\N$
on a regular, simplicial partition of a polytopal domain $\domain \subset \R^d$ for $d\geq2$.
We show that on shape regular meshes,
the network size is proportional to the number of degrees of freedom,
see Prop. \ref{prop:reluhofem} and its discussion in Rem. \ref{rem:sizeproptodof}.
Hereby, we improve upon \cite{LODSZ22_991}
in the sense that we obtain exact emulations with the same order of computational complexity,
but without using discontinuous \BiSU activations.
As compared to \cite{HX2023}, 
the present NN architectures achieve this with smaller network sizes.
\subsection{Layout}
\label{sec:outline}
The structure of this paper is as follows.  
In Sect. \ref{sec:wgtSobSpc}, we recall weighted function classes 
for characterizing the smoothness of functions with point singularities
and introduce classes of weighted Gevrey regular functions,
which include as a special case ($\delta=1$) weighted analytic functions.
In Sect. \ref{sec:hpAppr}, we recall $hp$-approximations of such functions
and their exponential convergence.
In Sect.~\ref{sec:nnbasics}, we fix notation and recall basic
terminology and results from deep neural networks, as needed 
subsequently.
The topic of Sect. \ref{sec:hofem}
is the exact emulation of high order finite elements by neural networks.
In Sect. \ref{sec:previoushofem}, we discuss previous results from \cite{HX2023}.
Based on an observation from \cite{HX2023},
a new and more efficient NN emulation is constructed in Sect. \ref{sec:efficienthofem}.
This is used in Sect. \ref{sec:nnhpApprox}
to obtain exponentially convergent NN approximations 
of Gevrey regular functions in polygonal or polyhedral domains
with point singularities.
Sect. \ref{sec:Concl} concludes the paper
with an overview of the main results 
and a discussion of the conditioning of the used Lagrangean finite element basis.
\section{Regularity and $hp$-Approximation of Point Singularities}
\label{sec:model}
We review definitions and results from weighted, countably-normed function classes, 
as used in $hp$-FE approximation theory, and in the corresponding elliptic regularity.
These corner-weighted, analytic classes were introduced in the late 80ies 
by I.M. Babu\v{s}ka and B.Q. Guo and 
by P. Bolley, J. Camus and M. Dauge in their pioneering works
\cite{GuoDiss85,BDC85,BabGuoRegI,BabGuoRegII,GB1993,ApproxhpFE} 
and the references there in space dimension $d=2$,
and in space dimension $d=3$ in \cite{GuoBab3dI,GuoBab3dII,CoDaNi2012}.

Throughout, $\domain\subset \R^d$ denotes an 
open, bounded, polytopal domain in Euclidean space of dimension $d=2,3$.
We denote by~$\S \subset \partial\domain$ 
a finite set of singular points.
We consider solutions $u\in H^1(\domain)$ 
which are smooth in $\overline{\domain}\backslash \S$ 
so that the singular support of $u$ coincides with $\S$.
This allows us to 
determine in $\domain$ a collection of $|\S|$ many 
disjoint open sets $\omega_c\subset \domain$ 
containing exactly one singularity $ c \in \S$ in their 
closure.
We denote
$\domain_0 :=\domain \backslash \overline{\bigcup_{c \in \S} \omega_c}$.
\subsection{Weighted Sobolev Spaces and Gevrey Classes}
\label{sec:wgtSobSpc}
We characterize analytic regularity of singular solutions
by weighted Sobolev spaces. 
To define these, we follow \cite[Sect.~2.1]{hpfem} 
and introduce distance functions to a corner point $c\in \S$:
\be\label{eq:radii}
r_c(x) = \dist(x, c)\;,\qquad x\in \domain\;,\quad c\in \S\;.
\ee
For each conical point $c\in\S$, a \emph{singular exponent}
$\beta_c \in \mathbb{R}$
quantifies the allowed strength of the singularity at $c$.
We collect all singular exponents
in the ``multi-weight-exponent''
\begin{equation}\label{eq:weight}
\wbeta=\{\beta_c:\, c\in\S\} \in \mathbb{R}^{|\S|}\;.
\end{equation}
We assume for $d=3$ ($\wbeta > s $ and $\wbeta\pm s$ being
understood componentwise for $s\in \mathbb{R}$) 
\begin{equation}\label{eq:beta}
\wb := - 1 - \wbeta \in (0,1/2)\;, \;\;\mbox{ie.} \;\; -3/2 \; <  \wbeta < \; -1 
\;.
\end{equation}
For $d=2$, we assume for some $\varepsilon>0$ that
\begin{equation}\label{eq:beta2}
\wb := - 1 - \wbeta \in (0,\varepsilon)\;, \;\;\mbox{ie.} \;\; -1-\varepsilon \; <  \wbeta < \; -1  
\;.
\end{equation}
We consider the \emph{inhomogeneous, corner-weighted semi-norms}
$|u|_{N^k_{\wbeta}(\domain)}$ given by
(cf.~\cite[Def.~6.2 and Eq.~(6.9)]{CoDaNi2012}, 
\cite{GB1993} and \cite{GuoBab3dI}),
\be\label{eq:sn}
|u|^2_{N^k_{\wbeta}(\domain)} 
= 
|u|^2_{H^k(\OO)}
+
\sum_{c\in\S}
\sum_{\genfrac{}{}{0pt}{}{\alpha\in\mathbb{N}^d_0}{|\alpha|=k}}
\big\|{r_c^{\max\{\beta_c + |\alpha|,0\}}D^{\alpha}u}\big\|^2_{L^2(\oc)}
\;, \;\; k\in \mathbb{N}_0 \;.
\ee
We define the 
\emph{inhomogeneous weighted norm} $\|u \|_{N^m_{\wbeta}(\domain)}$
by
$\NN{u}^2_{N^m_{\wbeta}(\domain)}=\sum_{k=0}^m\NN{u}^2_{N^k_{\wbeta}(\domain)}$.
Here, $|u|_{H^m(\domain_0)}$ 
signifies the Hilbertian Sobolev semi-norm of integer
order $m$ on $\domain_0$, and $D^{\alpha}$ denotes
the weak partial derivative of order~$\alpha\in \mathbb{N}_0^d$. 
The space $N^m_{\wbeta}(\domain)$ is
the weighted Sobolev space obtained as the closure
of~$C^\infty_0(\domain)$ with respect to the
norm~$\NN{\cdot}_{N^m_{\wbeta}(\domain)}$. 
\begin{remark}\label{rmk:WghtdSpcs}
The weighted spaces are related, for particular ranges of the weight parameters,
to various other corner-weighted scales of Sobolev spaces.
We refer to \cite{BabGuoRegI,GuoBab3dII,GuoBab3dI,CoDaNicMaxw,CoDaNi2012} 
and references there and the discussion in \cite{hpfem} for details.
\end{remark}
With $N^k_{\wbeta}(\domain)$ as defined in \eqref{eq:sn},
for $\delta > 0$ we define the 
\emph{$\wbeta$-weighted $\delta$-Gevrey regular class of functions
with point singularities at $\S$} 
by
\begin{equation}
\label{eq:Abeta} 
\calG^\delta_{\wbeta}(\S;\domain) 
= 
\bigg\{\, u \in\bigcap_{k\ge 0} 
N^k_{\wbeta}(\domain)\,:\,\text{$\exists\, C_u>0$ s.t.}\ 
|u|_{N^k_{\wbeta}(\domain)} 
\le 
C_u^{k+1}(k!)^\delta \ \forall\, k\in\mathbb{N}_0 \,\bigg\}
\;.
\end{equation}

A wide range of partial differential equations is known to admit
singular solutions in weighted Gevrey classes.
For example,
nonlinear Schr\"odinger equations in electron structure calculations
\cite{BHL2012,CCM2010,FHHS2009},
nonlinear parabolic PDEs with critical growth
\cite{HW2018,SGKM1995},
incompressible Euler equations
\cite{Chemin1998},
linear elasticity 
\cite{GB1993},
stationary Stokes 
\cite{GS2006}
and 
stationary, incompressible Navier Stokes 
\cite{MS2020,HMS2023},
see also the references of the cited papers.
We refer to \cite[Sect. 2.2]{hpfem} for more examples
and a more detailed exposition,
which includes the standard example of a 
linear, elliptic, second order PDE with weighted analytic solutions,
in a polygon.
\subsection{$hp$-Approximation}
\label{sec:hpAppr}
It is well-known (e.g. \cite{hpfem} and the references there)
that functions $u\in \calG^\delta_{\wbeta}(\S;\domain)$ admit 
approximations from systems of continuous, piecewise polynomial functions
at exponential rates in terms of the number $N$ of degrees of freedom
defining the approximations. In these ``$hp$-'' resp. ``variable mesh and
degree'' approximations, geometric subdivisions toward the singular support 
of $u$ are coupled to an increase in polynomial degree.
We recapitulate from \cite[Sect.~3]{hpfem} the construction 
of these approximations, and the corresponding exponential
approximation rate bounds. 
These comprise corresponding results first
obtained by I.M. Babu\v{s}ka and his coworkers in the analytic case
(where $\delta =1$ in \eqref{eq:Abeta}), 
see e.g. \cite{GuoDiss85,ApproxhpFE,BabGuoCurved},
and will form the basis for corresponding DNN emulation rate bounds.
\subsubsection{Geometric Meshes}
\label{sec:GeoMes}
We recall from \cite[Sect.~3.1]{hpfem} 
the notion of \emph{geometric mesh sequences} 
$\MMM_{\kappa,\sigma} = \{ \M^{(\ell)} \}_{\ell \geq 1}$
in $\domain$. 
Such mesh families in $\domain$ constitute an essential 
ingredient in the exponential convergence analysis of
$hp$-approximations \cite{GuoDiss85,ApproxhpFE,BabGuoCurved,Schwabphp,hpfem}.
Specifically, geometric mesh sequences
are sequences of regular, simplicial partitions
of $\domain$ for which there exist two parameters
$\sigma\in (0,1)$ and $\kappa>1$ with the following properties: 
\begin{itemize}
\item[(i)] 
All elements $K\in \M^{(\ell)}$, $\ell = 1,2,\ldots$ 
are uniformly $\kappa$-shape regular, 
i.e. 
there exists a constant $\kappa > 1$ such that
$\sup_{{\ell\in\N}} \kappa_{\rm sh}(\M^{(\ell)}) \leq \kappa$.
\item[(ii)] 
The partitions $\M^{(\ell)}  \in \MMM_{\kappa,\sigma}$
are $\sigma$-geometric, i.e.
for every 
$K\in \M^{(\ell)}: K \cap \S = \emptyset$, $\ell = 1,2, \ldots$
holds
\be\label{eq:DefGeoRatT}
0 < \sigma < 
\frac{{\rm diam}(K)}{{\rm dist}(K,\S)} < \frac{1}{\sigma}
\;.
\ee
\end{itemize}
It was shown in \cite[Prop.~1]{hpfem} that 
$(\kappa,\sigma)$-geometric mesh sequences $\MMM_{\kappa,\sigma}$ in $\domain$ 
have the following geometric properties:
for every $\ell>d$
all elements $K\in \M^{(\ell)}$
can be grouped in {\em mesh layers}:
there exists a partition
\begin{align} \label{eq:MOT}
\M^{(\ell)}
= &\,
\OOO^{(\ell)}\dcup \TTT^{(\ell)}
\;,
\qquad\text{ where }\qquad
\snorm{ \TTT^{(\ell)} } 
	\leq
		C_\TTT({\kappa,\sigma})
,
\end{align}
and, for $k\simeq \ell\log(2)/|\log(\sigma)|$,
there are partitions
\begin{align}
\label{eq:meshdecomp}
\OOO^{(\ell)} 
= 
\LLL^{(\ell)}_1 \dcup \LLL^{(\ell)}_2 \dcup \ldots \dcup \LLL^{(\ell)}_{k}
\;,
\end{align} 
such that 
there exists $c_\TTT>0$ such that for all $\ell$ holds
\begin{equation}\label{eq:termlay}
\S \subset \bigcup_{K\in \TTT^{(\ell)}} K
\;,
\qquad 
{\rm dist}(\S,\OOO^{(\ell)}) \geq c_\TTT \sigma^{k}
\;.
\end{equation}
Furthermore,
there exists a constant $c(\MMM_{\kappa,\sigma}) \geq 1$ with
\be\label{eq:LayerCard}
\forall j=1,\ldots,k :\qquad \snorm{\LLL^{(\ell)}_{j} } \leq c(\MMM_{\kappa,\sigma})
\ee
and such that, for every $j=1,\ldots,k$ and for every $K\in \LLL^{(\ell)}_j$,
\be\label{eq:GeomRat}
0 < \frac{1}{c(\MMM_{\kappa,\sigma})}
\leq
\frac{{\rm diam}(K)}{ \sigma^{j} }
\leq
c(\MMM_{\kappa,\sigma})
\;.
\ee
We will frequently use that 
\eqref{eq:MOT}, \eqref{eq:meshdecomp} and \eqref{eq:LayerCard} 
imply 
\begin{align}
\label{eq:meshcard}
\snorm{\M^{(\ell)}} \simeq k \simeq \ell
.
\end{align}
As shown in \cite[Prop.~1]{hpfem},
for a given polytopal domain $\domain$, 
singular set $\S$ and 
regular, simplicial initial triangulation $\M^{(0)}$, 
\cite[Alg. 1]{hpfem}
provides an explicit construction of 
a $\kappa$-shape-regular and $\sigma$-geometrically 
refined mesh sequence $\MMM_{\kappa,\sigma}$.
%
\subsubsection{Exponential Convergence}
\label{sec:ExpCnv}
Based on the geometric mesh sequences $\MMM_{\kappa,\sigma}$
in $\domain$, 
for $u\in \calG^\delta_{\wbeta}(\S;\domain)$,
there exist sequences of continuous,
piecewise polynomial (on $\M^{(\ell)}\in \MMM_{\kappa,\sigma}$) 
functions in $\domain$
approximating $u$ at an exponential rate.
\begin{proposition}[{{\cite[Thm.~1]{hpfem}}}]
\label{prop:eta}
In a bounded polytope 
$\domain\subset \mathbb{R}^d$, $d=2,3$,
with plane sides resp. plane faces,
suppose given a weight vector $\wbeta$ as in~\eqref{eq:beta} 
if $d=3$ or \eqref{eq:beta2} if $d=2$.

Then, for every sequence $\MMM_{\kappa,\sigma}(\S)$ of nested,
regular simplicial meshes in $\domain$
which are $\sigma$-geometrically refined towards $\S$
and which are $\kappa$-shape regular,
there exist continuous projectors 
$\Pi^{p}_{\kappa,\sigma}: 
  N^2_{\wbeta}(\domain) \to S^p(\M^{(\ell)})$
with $\ell\simeq p^{1/\delta}$
and, 
for every $u\in \calG^\delta_{\wbeta}(\S;\domain)$ 
there exist constants $b,C>0$
(depending on $\kappa$, $C_u$, $d_u$ in \eqref{eq:Abeta} and on $\sigma$)
such that there holds the error bound
\begin{equation}
\label{eq:eta}
\NN{u - \Pi^{p}_{\kappa,\sigma} u }_{H^1(\domain)} 
\leq C 
\begin{cases} \;\exp(-bN^{\frac{1}{1+\delta d}}) &\delta\geq 1,
\\
\; 
\left(\Gamma\left(N^{\frac{1}{1+\delta d}}\right)\right)^{-b(1-\delta)} &0<\delta<1.
\end{cases}
\end{equation}
Here,
$$
N={\rm dim}(S^p(\M^{(\ell)})) 
\simeq \snorm{\M^{(\ell)}} p^d
\simeq \ell p^{d} 
\simeq p^{d + {1/\delta}}.
$$
If, additionally, $u|_{\partial \domain} = 0$, 
then
$( \Pi^{p}_{\kappa,\sigma} u)|_{\partial \domain} = 0$ 
and \eqref{eq:eta} holds.
\end{proposition}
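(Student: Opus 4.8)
The statement is quoted from \cite[Thm.~1]{hpfem}, so strictly speaking the plan is to reduce to that reference; but it is instructive to sketch how the underlying $hp$-approximation argument proceeds, since the NN emulation results later in the paper rest on the structure of the projector $\Pi^p_{\kappa,\sigma}$. The plan is to build the projector elementwise on the geometric mesh sequence $\MMM_{\kappa,\sigma}$, exploiting the mesh-layer decomposition \eqref{eq:MOT}--\eqref{eq:LayerCard}, and then to estimate the $H^1$-error separately on (a) the terminal-layer elements $\TTT^{(\ell)}$ abutting $\S$, and (b) the "regular" layers $\LLL^{(\ell)}_j$, $j=1,\dots,k$, where the function is analytic/Gevrey with controlled derivative growth in terms of the local mesh size $\sigma^j$.

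\textbf{Step 1: local polynomial approximation on regular layers.} On each $K\in\LLL^{(\ell)}_j$ one has $\mathrm{diam}(K)\simeq\sigma^j$ and $\mathrm{dist}(K,\S)\simeq\sigma^j$ by \eqref{eq:GeomRat} and the $\sigma$-geometric property \eqref{eq:DefGeoRatT}. On such an element the weighted seminorm bound \eqref{eq:sn} together with the Gevrey bound \eqref{eq:Abeta} gives, after rescaling to a reference simplex, derivative bounds of the form $\|D^\alpha u\|_{L^2(K)}\lesssim C_u^{|\alpha|}(|\alpha|!)^\delta\sigma^{j(|\alpha|+\wbeta+d/2-1)}$ roughly speaking. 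One then invokes a scaled $hp$ polynomial approximation estimate on the reference element (e.g. Babu\v{s}ka--Guo / Melenk-type bounds for Gevrey-regular functions, interpolated in Sobolev norm) to obtain, for polynomial degree $p$, an $H^1(K)$-error of the form $\lesssim (\text{geometric factor})^p(p!)^{-(1-\delta)}$ or $\exp(-b p^{1/\delta})$ when $\delta\ge1$. Summing over the $O(1)$ elements per layer \eqref{eq:LayerCard} and over the $k\simeq\ell$ layers, the geometric decay in $j$ makes the sum converge, and one is left with a bound exponential (resp. Gamma-type) in $p$.

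\textbf{Step 2: terminal layer and matching.} On the $O(1)$ elements of $\TTT^{(\ell)}$ containing $\S$, no polynomial approximation converges, so one instead uses that $\mathrm{diam}(K)\lesssim\sigma^k$ there and that the weighted norm $\|u\|_{N^1_{\wbeta}(\domain)}$ controls the $H^1$-mass near the singularity through the weight $r_c^{\beta_c}$; with $\wb=-1-\wbeta$ in the admissible range \eqref{eq:beta}/\eqref{eq:beta2} this contributes $\lesssim\sigma^{2\wb k}\|u\|^2$, again geometrically small in $k\simeq\ell$. The local projectors on adjacent elements must be glued into a \emph{continuous} piecewise polynomial, which is the technically delicate point; one uses a Cl\'ement/averaging-type or face-degree-reduction construction so that traces match across interelement faces while preserving the local error bounds, and so that the homogeneous boundary condition $u|_{\partial\domain}=0$ is inherited — this is where the projector's $H^1$-stability and its commutation with the boundary trace come from.

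\textbf{Step 3: conversion to degrees of freedom.} Finally one relates $p$ and $\ell$: balancing the error contributions forces $\ell\simeq p^{1/\delta}$, and then $N=\dim(S^p(\M^{(\ell)}))\simeq\snorm{\M^{(\ell)}}p^d\simeq\ell p^d\simeq p^{d+1/\delta}$ by \eqref{eq:meshcard}. Inverting $p\simeq N^{1/(1+\delta d)}$ and substituting into the $p$-exponential bound yields \eqref{eq:eta}. The main obstacle in a self-contained proof is Step 2: constructing the global continuous (and boundary-condition-preserving) projector whose elementwise errors retain the Gevrey-sharp rates — the per-element analysis in Step 1 is essentially classical $hp$-theory, but the global assembly with interelement continuity on an \emph{arbitrary} shape-regular simplicial geometric mesh is what \cite{hpfem} does the real work for, and here we simply cite it.
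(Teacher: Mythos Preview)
The paper does not give a proof of this proposition: it is stated verbatim as a citation of \cite[Thm.~1]{hpfem} and used as a black box in the proof of Thm.~\ref{thm:nngevrey}. Your proposal correctly identifies this and then goes further than the paper by outlining the argument of the cited reference; that outline (layer-wise polynomial approximation on $\OOO^{(\ell)}$, weighted $H^1$ control on the terminal layer $\TTT^{(\ell)}$, continuity-preserving gluing, and the $\ell\simeq p^{1/\delta}$ balancing leading to $N\simeq p^{d+1/\delta}$) is an accurate high-level summary of how \cite{hpfem} proceeds, and you are right to flag the global $C^0$ assembly as the nontrivial step that is deferred to the reference.
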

%
\section{Neural Networks}
\label{sec:nnbasics}
%
The continuous, piecewise polynomial approximations 
in $S^p(\M^{(\ell)})$ 
of $u\in \calG^\delta_{\wbeta}(\S;\domain)$
imply the existence of deep neural networks with corresponding
exponential approximation rates. 
In Sect. \ref{sec:nnbasics}--\ref{sec:hofem},
we develop a rigorous statement of this fact. 

Sect.~\ref{sec:nndef} introduces basic notation and NN terminology,
from \cite{LODSZ22_991} and the references there. 
Sect.~\ref{sec:cpwlnn} recalls, from \cite{LODSZ22_991},
a result on the exact representation of 
continuous, piecewise linear (CPwL)
functions on the geometric partitions
$\M^{(\ell)}$ in Prop.~\ref{prop:eta}, 
by means of a ReLU-activated NN in $\domain$.

The main results on NN approximation of $u\in \calG^\delta_{\wbeta}(\S;\domain)$
are then developed in Sect.~\ref{sec:efficienthofem} ahead.
\subsection{Neural Network Definitions}
\label{sec:nndef}

\begin{definition}[Neural Network {{ \cite[Sect. 2.1]{LODSZ22_991} }}]
\label{def:nn} For $d,L\in\N$, a \emph{neural
  network $\Phi$} with input dimension $d \geq 1$ and number of
  layers $L\geq 1$, comprises a finite collection of activation
  functions\footnote{No activation is applied in the output layer $L$.
  We introduce $\varrho_{L}$ only for consistency of notation,
  and define it to be equal to the identity function.}
  $\bsvarrho =
  \{\varrho_{\ell}\}_{\ell=1}^L$
  and a finite sequence of matrix-vector tuples, i.e.
  \begin{align*}
    \Phi = ((A_1,b_1,\varrho_1),(A_2,b_2,\varrho_2),\ldots,(A_L,b_L,\varrho_L)).
  \end{align*}

  For $N_0 := d$ and \emph{numbers of neurons $N_1,\ldots,N_L\in\N$
  per layer}, for all $\ell=1,\ldots, L$ it holds that
  $A_\ell\in\bbR^{N_\ell \times N_{\ell-1} }$ and
  $b_\ell\in\bbR^{N_\ell}$, and that $\varrho_\ell$ is a list of
  length $N_\ell$ of \emph{activation functions}
  $(\varrho_\ell)_i: \R\to\R$, $i=1,\ldots,N_\ell$, acting on node $i$
  in layer $\ell$.

  The \emph{realization} of $\Phi$ as a map is
  the function
  \begin{align*}
    \realiz{\Phi}: \R^d\to\R^{N_L} : x \to x_L,
  \end{align*}
  where
  \begin{align*}
    x_0 & := x,
    \\
    x_\ell & := \varrho_\ell( A_\ell x_{\ell-1} + b_\ell ),
             \qquad\text{ for } \ell=1,\ldots,L.
  \end{align*}
  Here, for
  $\ell=1,\ldots,L$, 
  the list of activation functions
  $\varrho_\ell$ of length $N_\ell$ is effected
  component\-wise:
  for $y = (y_1,\ldots,y_{N_\ell})\in\bbR^{N_\ell}$ we
  denote
  $\varrho_\ell(y) = ( (\varrho_\ell)_1(y_1), \ldots,
  (\varrho_\ell)_{N_\ell}(y_{N_\ell}) )$.
  I.e., $(\varrho_\ell)_i$ is
  the activation function applied in position $i$ of layer $\ell$.

  We call the layers indexed by $\ell=1,\ldots,L-1$ \emph{hidden
    layers}, in those layers activation functions are applied. 
  We fix the activation function in the last layer of the NN to be the identity, 
  i.e., $\varrho_L := \Id_{\R^{N_L}}$.

  We refer to $\depth(\Phi) := L$ as the \emph{depth} of $\Phi$.  
  For
  $\ell=1,\ldots,L$ we denote by
  $\size_\ell(\Phi) := \norm[0]{A_\ell} + \norm[0]{b_\ell} $ the
  \emph{size of layer $\ell$}, which is the number of nonzero
  components in the weight matrix $A_\ell$ and the bias vector
  $b_\ell$, and call $\size(\Phi) := \sum_{\ell=1}^L \size_\ell(\Phi)$
  the \emph{size} of $\Phi$.
  Furthermore,
  we call $d$ and $N_L$ the
  \emph{input dimension} and the \emph{output dimension}, and denote
  by $\sizefirst(\Phi) := \size_1(\Phi)$ and
  $\sizelast(\Phi) := \size_L(\Phi)$ the size of the first and the
  last layer, respectively.
\end{definition}
Our networks will use two different activation functions.  
Firstly, we use the \emph{Rectified Linear Unit} (\emph{\ReLUnospace}) activation
\begin{equation}
\label{eq:reludef}
    \relu(x) = \max\{ 0, x \}.
\end{equation}
Networks which only contain \ReLU activations realize continuous,
piecewise linear functions. 
By \emph{\ReLU NNs} we refer to NNs which
only have \ReLU activations, including networks of depth $1$, which do
not have hidden layers and realize affine transformations.
Secondly, for the emulation of high-order finite element methods, 
we use the \ReLUtwo activation
\begin{align}
  \label{eq:relu2def}
  \relu^2(x) = \max\{ 0, x \}^2.
\end{align}

\begin{remark}
\label{rem:numberofneurons}
Upper bounds on the network size
also provide upper bounds on the number of neurons.
Without loss of generality, 
the network size is bounded from below by $\sum_{j=1}^L N_j$,
see \cite[Lem. G.1]{PV2018}.
There, it was proved formally that
each neuron to which no nonzero weights are associated can be omitted.
\end{remark}

In the following sections, we will construct NNs from smaller networks
using a 
\emph{calculus of NNs}, which we now recall from \cite{PV2018}.
The results cited from \cite{PV2018} were derived for
NNs which only use the \ReLU activation function, but they also hold
for networks with multiple activation functions without modification.

\begin{proposition}[Parallelization of NNs {{\cite[Def. 2.7]{PV2018}}}]
  \label{prop:parallel}
  For $d,L\in\N$ let $\Phi^1 = $ \linebreak
  $ \left
    ((A^{(1)}_1,b^{(1)}_1,\varrho^{(1)}_1),\ldots,(A^{(1)}_L,b^{(1)}_L,\varrho^{(1)}_L)\right
  ) $ and
  $\Phi^2 = \left
    ((A^{(2)}_1,b^{(2)}_1,\varrho^{(2)}_1),\ldots,(A^{(2)}_L,b^{(2)}_L,\varrho^{(2)}_L)\right
  ) $ be two NNs with input dimension $d$ and depth $L$.  Let the
  \emph{parallelization} $\Parallel{\Phi^1,\Phi^2}$ of $\Phi^1$ and
  $\Phi^2$ be defined by
  \begin{align*}
    \Parallel{\Phi^1,\Phi^2} := &\, ((A_1,b_1,\varrho_1),\ldots,(A_L,b_L,\varrho_L)),
    &&
    \\
    A_1 = &\, \begin{pmatrix} A^{(1)}_1 \\ A^{(2)}_1 \end{pmatrix},
    \quad
    A_\ell = \begin{pmatrix} A^{(1)}_\ell & 0\\0&A^{(2)}_\ell \end{pmatrix},
    &&
       \text{ for } \ell = 2,\ldots L,
    \\
    b_\ell = &\, \begin{pmatrix} b^{(1)}_\ell \\ b^{(2)}_\ell \end{pmatrix},
    \quad
    \varrho_\ell = \begin{pmatrix} \varrho^{(1)}_\ell \\ \varrho^{(2)}_\ell \end{pmatrix},
                                &&
                                   \text{ for } \ell = 1,\ldots L.
  \end{align*}

  Then,
  \begin{align*}
    \realiz{\Parallel{\Phi^1,\Phi^2}} (x)
    = &\, ( \realiz{\Phi^1}(x), \realiz{\Phi^2}(x) ),
        \quad
        \text{ for all } x\in\R^d,
    \\
    \depth(\Parallel{\Phi^1,\Phi^2}) = L, &
                                            \qquad
                                            \size(\Parallel{\Phi^1,\Phi^2}) = \size(\Phi^1) + \size(\Phi^2)
                                                  .
  \end{align*}

\end{proposition}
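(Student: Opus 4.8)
The statement to prove is Proposition \ref{prop:parallel} (Parallelization of NNs). The plan is to verify, by direct computation on the recursive definition of the realization map in Definition \ref{def:nn}, that the block-diagonal network $\Parallel{\Phi^1,\Phi^2}$ realizes the concatenation of the two realizations, and then to read off the depth and size bounds from the explicit form of the weight matrices and biases.

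First I would set up notation: write $x^{(i)}_0 := x$ and $x^{(i)}_\ell := \varrho^{(i)}_\ell( A^{(i)}_\ell x^{(i)}_{\ell-1} + b^{(i)}_\ell )$ for $i=1,2$ and $\ell = 1,\ldots,L$, so that $\realiz{\Phi^i}(x) = x^{(i)}_L$. For the parallelized network, denote its hidden states by $x_\ell$. The claim is that $x_\ell = (x^{(1)}_\ell, x^{(2)}_\ell)$ for all $\ell = 0,\ldots,L$, which I would prove by induction on $\ell$. The base case $\ell = 0$ is immediate since $x_0 = x = (x,x)$ is the stacked input (note both subnetworks receive the same input $x$, which is exactly why $A_1$ is formed by stacking $A^{(1)}_1$ on top of $A^{(2)}_1$ rather than placing them block-diagonally). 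For the induction step, assuming $x_{\ell-1} = (x^{(1)}_{\ell-1}, x^{(2)}_{\ell-1})$, one computes for $\ell \geq 2$
\begin{align*}
A_\ell x_{\ell-1} + b_\ell
= \begin{pmatrix} A^{(1)}_\ell & 0 \\ 0 & A^{(2)}_\ell \end{pmatrix}
  \begin{pmatrix} x^{(1)}_{\ell-1} \\ x^{(2)}_{\ell-1} \end{pmatrix}
  + \begin{pmatrix} b^{(1)}_\ell \\ b^{(2)}_\ell \end{pmatrix}
= \begin{pmatrix} A^{(1)}_\ell x^{(1)}_{\ell-1} + b^{(1)}_\ell \\ A^{(2)}_\ell x^{(2)}_{\ell-1} + b^{(2)}_\ell \end{pmatrix},
\end{align*}
and applying $\varrho_\ell = (\varrho^{(1)}_\ell, \varrho^{(2)}_\ell)$ componentwise — which is legitimate precisely because the activation lists are concatenated and each activation acts on its own coordinate — yields $x_\ell = (x^{(1)}_\ell, x^{(2)}_\ell)$. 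The case $\ell = 1$ is the same computation but with the stacked (rather than block-diagonal) $A_1$, using $x_0 = x$ for both blocks. Evaluating at $\ell = L$ and recalling $\varrho_L = \Id$ gives $\realiz{\Parallel{\Phi^1,\Phi^2}}(x) = (x^{(1)}_L, x^{(2)}_L) = (\realiz{\Phi^1}(x), \realiz{\Phi^2}(x))$.

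For the complexity claims: the depth is $L$ by construction, since $\Parallel{\Phi^1,\Phi^2}$ is a tuple of exactly $L$ matrix-vector-activation triples. For the size, $\size_\ell(\Parallel{\Phi^1,\Phi^2}) = \norm[0]{A_\ell} + \norm[0]{b_\ell}$; the stacked matrix $A_1$ has $\norm[0]{A_1} = \norm[0]{A^{(1)}_1} + \norm[0]{A^{(2)}_1}$, the block-diagonal matrices $A_\ell$ for $\ell \geq 2$ have $\norm[0]{A_\ell} = \norm[0]{A^{(1)}_\ell} + \norm[0]{A^{(2)}_\ell}$ since the off-diagonal blocks are zero and contribute nothing, and the stacked biases satisfy $\norm[0]{b_\ell} = \norm[0]{b^{(1)}_\ell} + \norm[0]{b^{(2)}_\ell}$. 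Summing over $\ell = 1,\ldots,L$ and regrouping the terms gives $\size(\Parallel{\Phi^1,\Phi^2}) = \size(\Phi^1) + \size(\Phi^2)$.

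There is no real obstacle here — the result is essentially definitional bookkeeping, and the only point requiring a little care is the distinction between the first layer (inputs shared, hence $A_1$ stacked) and the subsequent layers (hidden states kept separate, hence $A_\ell$ block-diagonal), together with the observation that the componentwise action of the concatenated activation list $\varrho_\ell = (\varrho^{(1)}_\ell, \varrho^{(2)}_\ell)$ respects the block structure. I would also remark that the construction and proof are insensitive to which activation functions appear, so the statement and argument apply verbatim to networks mixing \ReLU and \ReLUtwo activations, which is the setting needed in the sequel.
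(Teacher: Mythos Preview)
Your argument is correct and complete. Note that the paper does not actually supply a proof of this proposition: it is stated as a definition-plus-properties result cited directly from \cite[Def.~2.7]{PV2018}, with the claimed realization, depth, and size formulas taken as immediate consequences of the explicit block construction. Your inductive verification is exactly the routine bookkeeping one would carry out to justify those consequences, and it matches what the cited reference does.
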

The parallelization of more than two NNs is handled by repeated
application of Prop. \ref{prop:parallel}.

We will also use the parallelization of networks
which do not have the same inputs.
\begin{proposition}[Full Parallelization of NNs {{\cite[Setting 5.2]{EGJS2021}}}] 
\label{prop:parallSep}
For $L \in \N$ let
\linebreak
$ \Phi^1 = \Big(
	(A^{(1)}_1,b^{(1)}_1),\ldots,(A^{(1)}_L,b^{(1)}_L)
    \Big)$ 
and
$\Phi^2 = \Big(
	(A^{(2)}_1,b^{(2)}_1), \ldots,(A^{(2)}_L,b^{(2)}_L)
	\Big)$
be two NNs with the same depth $L$,
with input dimensions $N^1_0=d_1$ and $N^2_0=d_2$, respectively. 
Let the \emph{full parallelization} of $\Phi^1$ and $\Phi^2$
be defined by
\begin{align*}
\FParallel{\Phi^1,\Phi^2} := &\, ((A_1,b_1,\varrho_1),\ldots,(A_L,b_L,\varrho_L)),
	\\
    A_\ell = &\, \begin{pmatrix} A^{(1)}_\ell & 0\\0&A^{(2)}_\ell \end{pmatrix},
	\quad
    b_\ell = \begin{pmatrix} b^{(1)}_\ell \\ b^{(2)}_\ell \end{pmatrix},
    \quad
    \varrho_\ell = \begin{pmatrix} \varrho^{(1)}_\ell \\ \varrho^{(2)}_\ell \end{pmatrix},
    \quad
    \text{ for } \ell = 1,\ldots L
	.
\end{align*}

Then, it has $d = d_1+d_2$-dimensional input, 
depth $L$ and size 
$\size(\FParallel{\Phi^1,\Phi^2}) 
	= \size(\Phi^1) + \size(\Phi^2)$.
It satisfies
for all $x = (x_1,x_2) \in \R^d$ with $x_i \in \R^{d_i}, i = 1,2$ 
\begin{align*} 
\realiz{\FParallel{\Phi^1,\Phi^2}} (x_1,x_2) 
	= &\, \left(\realiz{\Phi^1}(x_1), \realiz{\Phi^2}(x_2)\right)
.
\end{align*}
\end{proposition}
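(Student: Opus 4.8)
The plan is to prove the three structural assertions (input dimension, depth, size) directly from the block definitions of $\FParallel{\Phi^1,\Phi^2}$, and then to establish the realization identity by an induction over the layers that exploits the fact that the forward pass decouples into its two blocks.

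For the structural part, I would first read off from the block-diagonal matrix $A_1$ (with blocks $A^{(1)}_1$ and $A^{(2)}_1$) that the input dimension is $N_0 = N^1_0 + N^2_0 = d_1 + d_2$, and note that the definition of $\FParallel{\Phi^1,\Phi^2}$ produces exactly $L$ matrix--vector--activation tuples, so $\depth(\FParallel{\Phi^1,\Phi^2}) = L$. For the size, for each $\ell$ the block-diagonal structure of $A_\ell$ forces $\norm[0]{A_\ell} = \norm[0]{A^{(1)}_\ell} + \norm[0]{A^{(2)}_\ell}$, since the two off-diagonal blocks are identically zero and contribute no nonzero entries, while the stacking of the bias vectors gives $\norm[0]{b_\ell} = \norm[0]{b^{(1)}_\ell} + \norm[0]{b^{(2)}_\ell}$; summing over $\ell=1,\ldots,L$ and recalling $\size_\ell = \norm[0]{A_\ell} + \norm[0]{b_\ell}$ from Def.~\ref{def:nn} yields $\size(\FParallel{\Phi^1,\Phi^2}) = \size(\Phi^1) + \size(\Phi^2)$.

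For the realization identity, write $x = (x_1,x_2)$ with $x_i \in \R^{d_i}$, let $x_\ell$ denote the intermediate vectors produced by $\FParallel{\Phi^1,\Phi^2}$ as in Def.~\ref{def:nn}, and let $y^{(i)}_\ell$ denote those of $\Phi^i$ on input $x_i$. I would prove by induction on $\ell = 0,1,\ldots,L$ that $x_\ell = (y^{(1)}_\ell, y^{(2)}_\ell)$, where the concatenation respects the block sizes $N^1_\ell$ and $N^2_\ell$. The base case is $x_0 = x = (x_1,x_2) = (y^{(1)}_0, y^{(2)}_0)$. In the inductive step, the block-diagonal form of $A_\ell$ together with the stacked bias gives $A_\ell x_{\ell-1} + b_\ell = (A^{(1)}_\ell y^{(1)}_{\ell-1} + b^{(1)}_\ell,\; A^{(2)}_\ell y^{(2)}_{\ell-1} + b^{(2)}_\ell)$ by the induction hypothesis; since $\varrho_\ell$ is the concatenation of the activation lists $\varrho^{(1)}_\ell$ and $\varrho^{(2)}_\ell$ and is applied componentwise, it acts separately on the two coordinate blocks, so $x_\ell = (y^{(1)}_\ell, y^{(2)}_\ell)$. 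Evaluating at $\ell = L$, where both $\varrho_L$ and its blocks are the identity by the convention in Def.~\ref{def:nn}, gives $\realiz{\FParallel{\Phi^1,\Phi^2}}(x_1,x_2) = x_L = (\realiz{\Phi^1}(x_1), \realiz{\Phi^2}(x_2))$.

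There is no genuine obstacle here: the argument is routine bookkeeping, essentially identical to the proof of the standard parallelization in Prop.~\ref{prop:parallel}, the only difference being that the inputs are no longer shared, so no input-duplication is needed and the first weight matrix is already block-diagonal rather than column-stacked. The single point that must be stated carefully is that the componentwise application of the concatenated activation list $\varrho_\ell$ respects the partition of coordinates into the two blocks of sizes $N^1_\ell$ and $N^2_\ell$; once this is observed, the induction closes at once.
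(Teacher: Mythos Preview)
Your proof is correct and complete; the argument is exactly the routine block-by-block verification one expects here. Note that the paper does not supply its own proof of this proposition---it is quoted from \cite[Setting~5.2]{EGJS2021}---so there is no alternative approach in the paper to compare against.
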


Next, we define the concatenation of two NNs.
\begin{definition}[Concatenation of NNs {{\cite[Def. 2.2]{PV2018}}}]
\label{def:pvconc}
For $L^{(1)}, L^{(2)} \in\N$,
let 
$ \Phi^1 = $ \linebreak
$ \Big(
	(A^{(1)}_1,b^{(1)}_1,\varrho^{(1)}_1),\ldots,$ $(A^{(1)}_{L^{(1)}},b^{(1)}_{L^{(1)}},\varrho^{(1)}_{L^{(1)}})
	\Big) $ and
$\Phi^2 = \Big(
	(A^{(2)}_1,b^{(2)}_1,\varrho^{(2)}_1),$
	$\ldots,(A^{(2)}_{L^{(2)}},b^{(2)}_{L^{(2)}},\varrho^{(2)}_{L^{(2)}})
	\Big)$
be two NNs 
such that the input dimension of $\Phi^1$
equals the output dimension of $\Phi^2$.
Then, the \emph{concatenation} of $\Phi^1$ and $\Phi^2$ 
is the NN of depth $L := L^{(1)} + L^{(2)} -1$ defined as
\begin{align*}
    \Phi^1 \bullet \Phi^2 := &\, ((A_1,b_1,\varrho_1),\ldots,(A_L,b_L,\varrho_L)),
    \\
    (A_\ell,b_\ell,\varrho_\ell) = &\, (A^{(2)}_\ell,b^{(2)}_\ell,\varrho^{(2)}_\ell),
	\qquad
    \text{ for }\ell=1,\ldots,L^{(2)}-1,
    \\
    A_{L^{(2)}} = &\, A^{(1)}_1 A^{(2)}_{L^{(2)}},
    \qquad
    b_{L^{(2)}} = A^{(1)}_1 b^{(2)}_{L^{(2)}} + b^{(1)}_1,
    \qquad
    \varrho_{L^{(2)}} = \varrho^{(1)}_1,
    \\
    (A_\ell,b_\ell,\varrho_\ell) 
    	= &\, (A^{(1)}_{\ell-L^{(2)}+1},b^{(1)}_{\ell-L^{(2)}+1}, \varrho^{(1)}_{\ell-L^{(2)}+1}),
	\qquad
	\text{ for }\ell=L^{(2)}+1,\ldots,L^{(1)}+L^{(2)}-1.
\end{align*}
\end{definition}
It follows immediately from this definition that
$\realiz{ \Phi^1 \bullet \Phi^2 } = \realiz{ \Phi^1 } \circ \realiz{ \Phi^2 }$.

In addition, we define the sparse concatenation of two NNs based on the \ReLUtwo activation,
which also exactly realizes the composition of the realizations of the two networks.
The sparse concatenation is a construction 
which allows to bound the size of the concatenation 
as a constant times the sum of the sizes of the individual networks. 
This bound does not hold if we 
combine the affine transformation of the output layer of $ \Phi^2 $ 
with the affine transformation of the input layer of $ \Phi^1 $,
as we did in Def. \ref{def:pvconc}.

\begin{proposition}[Sparse Concatenation of NNs based on \ReLUtwo {{\cite[Prop. 2.4]{OSZ19_2771}}}]
\label{prop:relutwoconcat}
For $L^{(1)},L^{(2)}\in\N$, let $\Phi^1$ and $\Phi^2$
be two NNs with depths $L^{(1)}$ and $L^{(2)}$, respectively, such
that $N^{(2)}_{L^{(2)}} = N^{(1)}_0$, i.e. the output dimension of
$\Phi^2$ equals the input dimension of $\Phi^1$.
Then, 
there exists a NN $\Phi^1\sconc_{\relu^2} \Phi^2$ 
of depth $L := L^{(1)} + L^{(2)}$ 
to which we shall refer as \emph{sparse \ReLUtwonospace-based concatenation} 
of $\Phi^1$ and $\Phi^2$ which has \ReLUtwo activations, 
as well as those from $\Phi^1$ and $\Phi^2$.
It satisfies
\begin{align*}
\realiz{\Phi^1 \sconc_{\relu^2} \Phi^2} = &\, \realiz{\Phi^1} \circ \realiz{\Phi^2},
	\qquad
	\depth(\Phi^1 \sconc_{\relu^2} \Phi^2) = \, L^{(1)} + L^{(2)},
	\\
\size(\Phi^1 \sconc_{\relu^2} \Phi^2) 
	\leq &\, \size(\Phi^1) + 4 \sizefirst(\Phi^1) + 7 \sizelast(\Phi^2) + \size(\Phi^2)
    \leq 5 \size(\Phi^1) + 8 \size(\Phi^2)
	.
\end{align*}
\end{proposition}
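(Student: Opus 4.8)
The plan is to construct $\Phi^1 \sconc_{\relu^2} \Phi^2$ explicitly by inserting, between $\Phi^2$ and $\Phi^1$, a single ``wiring'' layer that transmits the output of $\realiz{\Phi^2}$ unchanged into the input layer of $\Phi^1$, using the exact identity representation afforded by $\relu^2$. The key observation is that for any $y\in\R$ one has $y = \tfrac14\big(\relu^2(y+1) - \relu^2(-(y+1)) - \relu^2(y-1) + \relu^2(-(y-1))\big)$, or more simply $y = \tfrac14\big((y+1)^2 - (y-1)^2\big)$ realized via $\relu^2$ applied to $\pm(y+1)$ and $\pm(y-1)$ on a layer of $4$ neurons per coordinate (one can also use $2$ neurons per coordinate via $\relu^2(y)-\relu^2(-y)$ if one first shifts $y$ to be handled componentwise; the precise constant $4$ in the size bound comes from the chosen gadget). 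First I would write $\Phi^2 = ((A^{(2)}_1,b^{(2)}_1,\varrho^{(2)}_1),\dots,(A^{(2)}_{L^{(2)}},b^{(2)}_{L^{(2)}},\varrho^{(2)}_{L^{(2)}}))$ and $\Phi^1 = ((A^{(1)}_1,b^{(1)}_1,\varrho^{(1)}_1),\dots)$, and define the new depth-$(L^{(1)}+L^{(2)})$ network whose first $L^{(2)}-1$ layers are those of $\Phi^2$, whose layer $L^{(2)}$ is $(A^{(2)}_{L^{(2)}},b^{(2)}_{L^{(2)}})$ but now followed by the $\relu^2$ activation (applied after an auxiliary affine map $z\mapsto (z+\mathbf 1, -(z+\mathbf 1), z-\mathbf 1, -(z-\mathbf 1))$ stacked with the identity layer trick), whose layer $L^{(2)}+1$ recombines the four copies with coefficients $\tfrac14(1,-1,-1,1)$ (tensored over coordinates) and immediately composes this with $A^{(1)}_1$, $b^{(1)}_1$, $\varrho^{(1)}_1$, and whose remaining layers $L^{(2)}+2,\dots,L^{(1)}+L^{(2)}$ are layers $2,\dots,L^{(1)}$ of $\Phi^1$.

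The next step is to verify $\realiz{\Phi^1\sconc_{\relu^2}\Phi^2} = \realiz{\Phi^1}\circ\realiz{\Phi^2}$. This is a direct unwinding of Definition \ref{def:nn}: the first $L^{(2)}-1$ layers reproduce the hidden-layer computation of $\Phi^2$; applying the affine map and $\relu^2$ at layer $L^{(2)}$ followed by the affine recombination at layer $L^{(2)}+1$ yields exactly $z = A^{(2)}_{L^{(2)}}x_{L^{(2)}-1}+b^{(2)}_{L^{(2)}} = \realiz{\Phi^2}(x)$ by the quadratic identity above, after which $A^{(1)}_1 z + b^{(1)}_1$ and $\varrho^{(1)}_1$ begin the computation of $\Phi^1$ on input $z$; the tail layers finish it. The depth is $(L^{(2)}-1) + 1 + 1 + (L^{(1)}-1) = L^{(1)}+L^{(2)}$ as claimed. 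For the size bound I would track layer by layer: all layers strictly interior to $\Phi^2$ contribute $\size(\Phi^2) - \sizelast(\Phi^2)$; the output weights of $\Phi^2$ enter the new layer $L^{(2)}$, and the identity/duplication gadget multiplies their nonzero count and the nonzero bias count by at most a fixed constant, giving the $\le 7\sizelast(\Phi^2)$ term; the recombination-composed-with-$A^{(1)}_1$ layer has size at most a constant times $\sizefirst(\Phi^1)$ plus $\snorm{b^{(1)}_1}$, giving the $4\sizefirst(\Phi^1)$ term; the tail contributes $\size(\Phi^1) - \sizefirst(\Phi^1)$. Collecting terms yields $\size \le \size(\Phi^1) + 4\sizefirst(\Phi^1) + 7\sizelast(\Phi^2) + \size(\Phi^2)$, and since $\sizefirst(\Phi^1)\le\size(\Phi^1)$ and $\sizelast(\Phi^2)\le\size(\Phi^2)$, also $\le 5\size(\Phi^1)+8\size(\Phi^2)$.

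The main obstacle — really the only nontrivial bookkeeping — is getting the constants $4$ and $7$ exactly right, since they depend on the precise choice of the $\relu^2$-based identity gadget and on how one counts the nonzero entries that appear when the duplication matrix is multiplied into $A^{(2)}_{L^{(2)}}$ (one must be careful that rows of $A^{(2)}_{L^{(2)}}$ with many nonzeros do not get blown up by more than the constant factor, which holds because the duplication matrix has exactly one nonzero per row). Since this is precisely the statement cited verbatim from \cite[Prop. 2.4]{OSZ19_2771}, I would in fact simply invoke that reference for the explicit gadget and constant-chasing, and restrict the present argument to checking that the construction there, stated for \ReLU/\ReLUtwo networks, applies unchanged when $\Phi^1$ and $\Phi^2$ carry arbitrary (possibly mixed) activation lists — which is immediate, as the gadget only touches the two interface layers and leaves every other activation untouched.
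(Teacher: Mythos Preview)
The paper does not prove this proposition; it is quoted verbatim from \cite[Prop.~2.4]{OSZ19_2771} and simply invoked. Your explicit construction is the standard one underlying that reference: insert between $\Phi^2$ and $\Phi^1$ a single \ReLUtwo layer realizing the identity via the polarization identity, then count nonzeros. One small slip: your four-term formula has the wrong signs. Since $(y\pm1)^2 = \relu^2(y\pm1)+\relu^2(-(y\pm1))$, the correct expression is
\[
y \;=\; \tfrac14\big(\relu^2(y+1)+\relu^2(-(y+1))-\relu^2(y-1)-\relu^2(-(y-1))\big),
\]
not the $(+,-,-,+)$ pattern you wrote (which gives $1/2$ at $y=0$). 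This does not affect the architecture, the depth count, or the size bookkeeping, all of which are correct in your sketch. Your closing remark is exactly right: in the present paper one simply cites \cite{OSZ19_2771}, and the only thing to note is that the construction there transfers unchanged to networks with arbitrary activation lists because the gadget only modifies the two interface layers.
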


Prop. \ref{prop:parallel} and \ref{prop:parallSep}
only apply to networks of equal depth.  
To parallelize two networks of unequal depth, 
the shallower of the two
can be concatenated with a network that
emulates the identity using 
Prop.~\ref{prop:relutwoconcat}.

Next, we recall the exact emulation of the identity and of products by \ReLUtwo NNs.

\begin{proposition}[\ReLUtwo NN emulation of $\Id_{\R^d}$ {{\cite[Prop. 2.3]{OSZ19_2771}}}]
\label{prop:relutwoidnn}
For all $d\in\N$ and $L\in\N$,
there exists a \ReLUtwo NN $\Phi^{\Id,\relu^2}_{d,L}$ such that 
$\realiz{\Phi^{\Id,\relu^2}_{d,L}}(x) = x$ for all $x\in\R^d$.
Its NN depth and size satisfy
$\depth(\Phi^{\Id,\relu^2}_{d,L}) = L$ 
and 
$\size(\Phi^{\Id,\relu^2}_{d,L}) \leq C d L$,
for $C>0$ independent of $d$ and $L$.
\end{proposition}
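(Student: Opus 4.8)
The plan is to realize the identity on $\R^d$ by sending the input through $L-1$ hidden layers, each of which reproduces its argument exactly while using only \ReLUtwo activations, and then to count nonzero parameters. The algebraic engine is the elementary identity $s^2 = \relu^2(s) + \relu^2(-s)$, valid for every $s\in\R$ since exactly one of $s$, $-s$ is nonnegative. Applying it to $s=t+1$ and $s=t$ and using $(t+1)^2 - t^2 - 1 = 2t$ yields, for every $t\in\R$,
\begin{equation*}
t = \tfrac12\big( \relu^2(t+1) + \relu^2(-t-1) - \relu^2(t) - \relu^2(-t) - 1 \big).
\end{equation*}
Hence a single hidden layer of width $4$ with \ReLUtwo activation, fed the four affine preactivations $t+1$, $-t-1$, $t$, $-t$, followed by the affine output combination above, reproduces the scalar map $t\mapsto t$.

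For $d\geq 1$ and $L\geq 2$ I would take layer $1$ to map $x\in\R^d$ to the $4d$ preactivations $\{x_i+1,\,-x_i-1,\,x_i,\,-x_i\}_{i=1}^d$ and apply \ReLUtwonospace; each intermediate hidden layer $\ell = 2,\dots,L-1$ first recovers $x$ coordinatewise via the displayed identity and then immediately re-forms the same $4d$ preactivations, the composition of the recovery map and the re-forming map being absorbed into a single matrix $A_\ell\in\R^{4d\times 4d}$ before \ReLUtwonospace is applied; and the final layer $L$, which carries the identity activation by the convention of Def.~\ref{def:nn}, applies the recovery map once more to output $x\in\R^d$. By construction $\depth = L$ and $\realiz{\cdot} = \Id_{\R^d}$. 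For the size bound one observes that every weight matrix is block structured over the $d$ coordinates: $A_1$ and $A_L$ have at most $4d$ nonzero entries, each intermediate $A_\ell$ has at most $16d$ (a $4\times 4$ block per coordinate), and all bias vectors have $O(d)$ nonzero entries, so $\size \leq C d L$ with $C$ an absolute constant. The case $L=1$ is immediate: the depth-$1$ network $x\mapsto x$ already has the identity activation and size $d$.

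There is essentially no serious obstacle here: the scalar identity is exact on all of $\R$ with no case distinction needed, so the only thing requiring attention is the bookkeeping of nonzero weights and biases, carried out so as to confirm that the constant $C$ is independent of both $d$ and $L$ — which it is, since the per-layer nonzero counts are bounded by an absolute multiple of $d$ and there are $L$ layers. As an alternative one could instead build a fixed depth-$2$, width-$O(d)$ identity block from the scalar identity and compose copies of it via Prop.~\ref{prop:relutwoconcat}, but the direct layer-by-layer construction above gives cleaner constants and the exact prescribed depth $L$ without any padding.
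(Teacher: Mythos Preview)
Your proof is correct and self-contained. The paper itself does not supply a proof of this proposition --- it is simply recalled from \cite[Prop.~2.3]{OSZ19_2771} --- so there is nothing to compare against in the present text. Your polarization identity $t = \tfrac12\big((t+1)^2 - t^2 - 1\big)$ combined with $s^2 = \relu^2(s)+\relu^2(-s)$ is a standard and clean way to realize the scalar identity through a single \ReLUtwo layer, and your layer-by-layer bookkeeping correctly yields depth $L$ and size $O(dL)$ with an absolute constant.
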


\begin{proposition}[\ReLUtwo NN emulation of products]
\label{prop:nnprod}
For all $d\in\N$, $d\geq2$,
there exists a \ReLUtwo NN $\Phi^{\mathrm{prod}}_d$ such that 
$\realiz{\Phi^{\mathrm{prod}}_d}(x_1,\ldots,x_d) = \prod_{j=1}^d x_j$ 
for all $x_1,\ldots,x_d\in\R$.
Its NN depth and size satisfy
$\depth(\Phi^{\mathrm{prod}}_d) \leq C \ceil{\log_2(d)}$
and 
$\size(\Phi^{\mathrm{prod}}_d) \leq C d$,
for $C>0$ independent of $d$.
\end{proposition}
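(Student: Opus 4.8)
The plan is to realize $\Phi^{\mathrm{prod}}_d$ as a balanced binary multiplication tree whose levels are emulated by consecutive \ReLUtwo layers. The elementary building block is the polarization identity: for $x,y\in\R$,
\begin{equation*}
xy = \tfrac14\bigl( \relu^2(x+y) + \relu^2(-x-y) - \relu^2(x-y) - \relu^2(-x+y) \bigr),
\end{equation*}
which follows from $\relu^2(z)+\relu^2(-z)=z^2$ and $(x+y)^2-(x-y)^2=4xy$. Thus a single \ReLUtwo hidden layer of width $4$ followed by one affine output layer realizes $(x,y)\mapsto xy$ exactly; this already settles the case $d=2$ and supplies the gadget to be iterated.

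For general $d$, I would set $L_0:=\ceil{\log_2 d}$ and build a network with $L_0$ \ReLUtwo hidden layers and a final affine output layer. Layer $j\in\{1,\dots,L_0\}$ implements the $j$-th tree level: its neurons compute, for each pair $(v_{2i-1},v_{2i})$ of partial products available after level $j-1$, the four quantities $\relu^2(\pm(v_{2i-1}\pm v_{2i}))$, so that $v_{2i-1}v_{2i}$ is an affine functional of layer $j$'s output. The key structural observation — and the step I expect to need the most care — is that each partial product entering layer $j{+}1$ is itself an affine functional of the output of layer $j$; hence the composite map ``reconstruct the partial products, then form the sums and differences required for the next pairing'' is affine and can be folded into the weight matrix feeding the \ReLUtwo activations of layer $j{+}1$. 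This is what makes one \ReLUtwo layer suffice per tree level, and, more importantly, it lets one write down the weight matrices directly rather than iterating the sparse concatenation of Prop.~\ref{prop:relutwoconcat}, whose $O(1)$ size factor applied once per level would inflate the size to a superlinear power of $d$. When the current number of values at a level is odd, the unpaired value is ``paired'' with the constant $1$, encoded through the bias vector (equivalently, carried forward by the \ReLUtwo identity gadget of Prop.~\ref{prop:relutwoidnn}); after $L_0$ levels exactly one value remains, namely $\prod_{j=1}^d x_j$, which the output layer returns via the affine combination of the polarization identity.

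It remains to count depth and size. The depth is $L_0+1=\ceil{\log_2 d}+1\le C\ceil{\log_2 d}$ for $d\ge2$. The width of layer $j$ is $4\ceil{d/2^{j}}$, so the total number of neurons is $\sum_{j=1}^{L_0}4\ceil{d/2^{j}}=O(d)$. Each \ReLUtwo neuron in layer $j\ge2$ reads an affine combination of at most two partial products of the previous level, each supported on four neurons there, hence has at most $8$ nonzero incoming weights (and $2$ for $j=1$, reading from the input), and the output neuron reads four neurons of layer $L_0$; therefore $\size(\Phi^{\mathrm{prod}}_d)\le 9\sum_{j=1}^{L_0}4\ceil{d/2^{j}}+O(\log d)=O(d)$ with a constant independent of $d$, as claimed. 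The same network can equivalently be described recursively, with $\Phi^{\mathrm{prod}}_1:=\Phi^{\Id,\relu^2}_{1,1}$, by splitting the factors into two halves, full-parallelizing (Prop.~\ref{prop:parallSep}) the two half-product networks after equalizing their depths (which differ by at most one) using identity networks of $O(1)$ size, and composing with $\Phi^{\mathrm{prod}}_2$ — but, as noted above, this final composition must again be performed by merging affine layers rather than through Prop.~\ref{prop:relutwoconcat} if the overall size is to remain linear in $d$.
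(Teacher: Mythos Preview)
Your argument is correct and takes a genuinely different route from the paper's. You build the binary multiplication tree directly, folding the affine reconstruction of each level's partial products into the next layer's pre-activation affine map; this sidesteps the sparse concatenation of Prop.~\ref{prop:relutwoconcat} entirely and yields depth $\lceil\log_2 d\rceil+1$ and size $O(d)$ with small explicit constants. The paper instead \emph{does} use Prop.~\ref{prop:relutwoconcat} as a black box, but circumvents precisely the superlinear blow-up you diagnose by working in base $8$ rather than base $2$: it first assembles $\Phi^{\mathrm{prod}}_8$ from three levels of $\Phi^{\mathrm{prod}}_2$'s, then for $d\in 8^{\N}$ sets $\Phi^{\mathrm{prod}}_d := \Phi^{\mathrm{prod}}_{d/8}\sconc_{\relu^2}\FParallel{\Phi^{\mathrm{prod}}_8,\ldots,\Phi^{\mathrm{prod}}_8}$. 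Because $8$ exceeds the constant $5$ in the bound $\size(\Phi^1\sconc_{\relu^2}\Phi^2)\le 5\,\size(\Phi^1)+8\,\size(\Phi^2)$, the size recursion closes at $\size(\Phi^{\mathrm{prod}}_d)\le 8dM_8$; general $d$ is then handled by padding to the next power of $8$. Your approach is more elementary and gives a tighter depth (roughly $\lceil\log_2 d\rceil$ versus the paper's $2\lceil\log_2 d\rceil$), at the cost of writing out the weight matrices explicitly; the paper's approach is more modular, staying entirely within the NN calculus of Sect.~\ref{sec:nndef} without ever opening up individual layers.
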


\begin{proof}
This proof is given in three steps.
In Step 1, 
we construct $\Phi^{\mathrm{prod}}_8$
and analyze its depth and size.
In Step 2,
we construct and analyze
$\Phi^{\mathrm{prod}}_d$
for $d>8$ satisfying $d\in 8^\N$
as an octree of $\Phi^{\mathrm{prod}}_8$ NNs.
Here, $8^\N = \{ 8^k : k\in\N \}$ denotes the set of positive integer powers of $8$.
The reason for considering an octree of $\Phi^{\mathrm{prod}}_8$ NNs,
is that we need $8$ to be bigger than the number $5$ from the estimate
$\size(\Phi^1 \sconc_{\relu^2} \Phi^2) 
	\leq 5 \size(\Phi^1) + 8 \size(\Phi^2)$
from Prop. \ref{prop:relutwoconcat}.
Finally, Step 3
considers $d\geq 2$ satisfying $d\notin 8^\N$.

\textbf{Step 1.}
For the product of two numbers,
$\Phi^{\mathrm{prod}}_2$ is given in \cite[Lem. 2.1]{LTY2020RePU}.
It satisfies 
$\realiz{ \Phi^{\mathrm{prod}}_2 }(x,y) = xy$ for all $x,y\in\R$,
$\depth( \Phi^{\mathrm{prod}}_2 ) = 2$, 
$N_0 = 2$, $N_1=4$, $N_2=1$ 
and
$\size( \Phi^{\mathrm{prod}}_2 ) = 12$.
For $d=8$, we define
\begin{align*}
\Phi^{\mathrm{prod}}_8 
	:= \Phi^{\mathrm{prod}}_2 \sconc_{\relu^2} 
		\FParallel{ \Phi^{\mathrm{prod}}_2, \Phi^{\mathrm{prod}}_2 } \sconc_{\relu^2} 
		\FParallel{ \Phi^{\mathrm{prod}}_2, \Phi^{\mathrm{prod}}_2, 
			\Phi^{\mathrm{prod}}_2, \Phi^{\mathrm{prod}}_2 }
.
\end{align*}
By Prop. \ref{prop:relutwoconcat} it satisfies 
$\depth( \Phi^{\mathrm{prod}}_8 ) = 2+2+2 = 6$,
and we denote its size by 
$M_8 := \size( \Phi^{\mathrm{prod}}_8 )$.
The network $\Phi^{\mathrm{prod}}_2$ exactly emulates the product of two numbers,
and no errors are incurred in the sparse concatenation $\sconc_{\relu^2}$,
thus 
$\realiz{ \Phi^{\mathrm{prod}}_8 } : \R^8\to\R: 
	(x_1,\ldots,x_8) \mapsto \prod_{i=1}^8 x_i$.

\textbf{Step 2.}
For $d>8$ satisfying $d\in 8^\N$, 
we define 
\begin{align*}
\Phi^{\mathrm{prod}}_d 
	:= \Phi^{\mathrm{prod}}_{d/8} \sconc_{\relu^2} 
		\FParallel{ \Phi^{\mathrm{prod}}_8, \ldots, \Phi^{\mathrm{prod}}_8 }
,
\end{align*}
where the full parallelization contains $d/8$ product networks.
Based on Prop. \ref{prop:relutwoconcat}
and the depth and size bounds for the $d=8$ product network, 
we inductively obtain that 
for all $d\in 8^\N$ satisfying $d>8$ holds
$\depth( \Phi^{\mathrm{prod}}_d ) = 2 \ceil{\log_2(d)}$
and
$\size( \Phi^{\mathrm{prod}}_d ) \leq 8 d M_8$:
\begin{align*}
\depth( \Phi^{\mathrm{prod}}_d )
	= &\, \depth( \Phi^{\mathrm{prod}}_{d/8} ) + \depth( \Phi^{\mathrm{prod}}_8 )
	= 2 \ceil{\log_2(d/8)} + 6
	= 2 \ceil{\log_2(d)}
	,
	\\
\size( \Phi^{\mathrm{prod}}_d )
	\leq &\, 5 \size( \Phi^{\mathrm{prod}}_{d/8} ) + 8 (d/8) \size( \Phi^{\mathrm{prod}}_8 )
	\\
	\leq &\, 5 \cdot 8 ( d/8 ) M_8 + 8 (d/8) M_8 
	= ( 5 \cdot 8 + 8 ) (d/8) M_8
	\leq ( 8 \cdot 8 ) (d/8) M_8
	= 8 d M_8
	.
\end{align*}
The fact that
$\realiz{ \Phi^{\mathrm{prod}}_d } : \R^d\to\R: 
	(x_1,\ldots,x_d) \mapsto \prod_{i=1}^d x_i$
follows from exactness of 
$\realiz{ \Phi^{\mathrm{prod}}_{d/8} }$ 
and
$\realiz{ \Phi^{\mathrm{prod}}_8 }$
and the fact that no error is incurred in the sparse concatenation $\sconc_{\relu^2}$.

\textbf{Step 3.}
Finally, for $d\geq2$ satisfying $d\notin 8^\N$,
let $\tilde{d} = \min\{ 8^k : k\in\N, 8^k\geq d \} \in 8^\N$.
By definition, it holds that $\tilde{d} \leq 8d$.
We obtain
$\Phi^{\mathrm{prod}}_d$ 
from 
$\Phi^{\mathrm{prod}}_{\tilde{d}}$
by setting the last $\tilde{d}-d$ inputs of the NN to $1$,
through the biases in the first layer.
This 
gives the desired realization and
increases the network size by at most $\tilde{d} - d$.
We thus obtain that
\begin{align*}
\depth( \Phi^{\mathrm{prod}}_d )
	= &\, \depth( \Phi^{\mathrm{prod}}_{\tilde{d}} )
	= 2 \ceil{\log_2(\tilde{d})}
	\leq 2 \ceil{\log_2(d)} + 6
	\leq C \ceil{\log_2(d)}
	,
	\\
\size( \Phi^{\mathrm{prod}}_d )
	\leq &\, \size( \Phi^{\mathrm{prod}}_{\tilde{d}} ) + (\tilde{d} - d)
	\leq 8 \tilde{d} M_8 + \tilde{d}
	\leq C d.
\end{align*}
\end{proof}

Finally, we recall the exact emulation of polynomials by \ReLUtwo networks.
We need the result from \cite{OSZ19_2771}, 
which holds for multivariate polynomials,
only for the special case of univariate polynomials.

\begin{proposition}[\ReLUtwo emulation of univariate polynomials {{\cite[Prop. 2.14]{OSZ19_2771}}}]
\label{prop:univarpol}
For all $p\in\N$ and $w\in\bbP_p$,
there exists a \ReLUtwo NN $\Phi^w$ such that 
$\realiz{\Phi^w} = w$,
$\depth(\Phi^w) \leq C \log_2(p+1)$ 
and 
$\size(\Phi^w) \leq C p$,
for $C>0$ independent of $p$ and $w$.
\end{proposition}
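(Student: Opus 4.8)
The plan is to emulate a univariate polynomial $w\in\bbP_p$ by first writing it in the monomial (or, for better conditioning, Chebyshev) basis, and then realizing each monomial $x^j$, $j=0,\ldots,p$, by a small \ReLUtwo subnetwork built from the squaring map, running these in parallel and taking an affine combination in the output layer. Concretely, I would use the identity $xy = \tfrac14\big((x+y)^2-(x-y)^2\big)$, which is exactly emulated by the depth-$2$ product network $\Phi^{\mathrm{prod}}_2$ from \cite[Lem.~2.1]{LTY2020RePU} recalled in the proof of Prop.~\ref{prop:nnprod}; iterating this gives $x^j$ as a balanced binary tree of multiplications of depth $\lceil\log_2 j\rceil$, hence each monomial up to degree $p$ is realized by a \ReLUtwo network of depth at most $C\log_2(p+1)$ and size $O(\log p)$ (or even $O(1)$ per multiplication).

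The key steps, in order, are: (i) fix the representation $w(x)=\sum_{j=0}^p c_j x^j$ and note we only need the monomials $1,x,x^2,\ldots,x^p$; (ii) build, via the binary-tree construction using $\Phi^{\mathrm{prod}}_2$ and the sparse \ReLUtwonospace-based concatenation $\sconc_{\relu^2}$ of Prop.~\ref{prop:relutwoconcat}, a single network that outputs the whole vector $(x^0,\ldots,x^p)$ — this is the step that needs care, because naively computing each power independently costs $\Theta(p\log p)$ size, whereas sharing intermediate powers (compute $x^2$, then $x^3=x^2\cdot x$, $x^4=x^2\cdot x^2$, etc., in a layered fashion, at most $O(p)$ multiplications total over $O(\log p)$ layers, using $\Phi^{\Id,\relu^2}$ from Prop.~\ref{prop:relutwoidnn} to carry forward already-computed powers through the remaining layers) keeps the size at $O(p)$; (iii) equalize depths of the parallel branches by concatenating shorter branches with identity networks (Prop.~\ref{prop:relutwoidnn}), then form their parallelization by Prop.~\ref{prop:parallel}; (iv) prepend/append affine layers (absorbed into the input and output layers at no asymptotic cost) to map $x\mapsto$ the power vector $\mapsto \sum_j c_j x^j = w(x)$; (v) collect the depth and size bounds: depth $\le C\log_2(p+1)$ from the tree height plus the $O(\log p)$ layers of the power-accumulation scheme, and size $\le Cp$ from $O(p)$ constituent $O(1)$-size multiplication/identity blocks, invoking the additivity of size under $\FParallel{\cdot}$ and $\sconc_{\relu^2}$ in Propositions~\ref{prop:parallSep} and~\ref{prop:relutwoconcat}.

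The main obstacle is step (ii): getting the size \emph{linear} in $p$ rather than $p\log p$. The honest route is to reuse previously computed powers, so that across the $\approx\log_2 p$ layers one performs in total $O(p)$ multiplications and $O(p)$ identity pass-throughs; one then has to check that the sparse concatenation constant ($5$ in Prop.~\ref{prop:relutwoconcat}) does not cause a geometric blow-up across the $O(\log p)$ layers. It does not, because the accumulated size grows like a sum $\sum_{\ell} 5^{\,?}\cdot(\text{layer }\ell\text{ size})$ only if layers are concatenated one-at-a-time; instead one parallelizes all the new multiplications at a given depth into one wide sub-network and concatenates those $O(\log p)$ wide blocks, so the constant multiplies a geometrically decreasing (in the number of remaining multiplications) quantity and the total remains $O(p)$. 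Since this exact result is already available as \cite[Prop.~2.14]{OSZ19_2771} (stated there for multivariate polynomials), the cleanest proof is simply to invoke that reference and specialize to one variable; I would include the sketch above only as a remark on why the bound holds, and otherwise let Proposition~\ref{prop:univarpol} stand on the cited result.
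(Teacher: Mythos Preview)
Your final recommendation---to let Proposition~\ref{prop:univarpol} stand on the cited result \cite[Prop.~2.14]{OSZ19_2771}---is exactly what the paper does: the proposition is stated as a specialization of that multivariate result to one variable, with no proof given in the paper itself. So your proposal agrees with the paper's treatment.

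One remark on your sketch, in case you do include it: your worry about the sparse-concatenation constant is legitimate, and your resolution is slightly too casual. The inequality $\size(\Phi^1\sconc_{\relu^2}\Phi^2)\le 5\size(\Phi^1)+8\size(\Phi^2)$ applied across $O(\log p)$ levels \emph{does} in general produce a polynomial-in-$p$ factor; what actually saves you is the sharper form $\size(\Phi^1\sconc_{\relu^2}\Phi^2)\le \size(\Phi^1)+4\sizefirst(\Phi^1)+7\sizelast(\Phi^2)+\size(\Phi^2)$ from Prop.~\ref{prop:relutwoconcat}, since the interface layers at level $\ell$ have size $O(2^\ell)$ and these sum geometrically to $O(p)$. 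Similarly, carrying forward already-computed powers via identity networks must be accounted for carefully: at level $\ell$ you forward $O(2^\ell)$ values through one layer, not through all remaining layers, so again the total is $\sum_\ell O(2^\ell)=O(p)$. With these two points made explicit, the sketch is sound.
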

\subsection{NN Emulations of Continuous, Piecewise Linear Functions on Simplicial Meshes}
\label{sec:cpwlnn}
We recall from \cite{LODSZ22_991}
\emph{exact NN emulations} 
of CPwL functions on regular, simplicial partitions $\calT$ 
of polytopal domains $\domain \subset \bbR^d$.

We have by \cite[Sect. 5]{LODSZ22_991} a vector space of NNs
$\calNN(CPwL;\calT,\domain) 
	= \{ \Phi^{CPwL,v} : v\in\So(\calT,\domain) \}$
such that the realization of each NN $\Phi^{CPwL,v}$ equals $v$ everywhere in $\overline\domain$.

\begin{proposition}[{{ \cite[Prop. 5.7]{LODSZ22_991} }}]
\label{prop:CPLbasisnet}
Let $\domain\subset\R^d$, $d\geq2$, be a bounded, polytopal domain.
For every regular, simplicial triangulation $\calT$ of $\domain$,
there exists a NN $\Phi^{CPwL} :=\Phi^{CPwL(\calT,\domain)}$ with
only \ReLU activations, which in parallel emulates the shape
functions $\{ \theta^{\So}_i \}_{i\in \calV}$,
which are defined by $\theta^{\So}_i(i) = 1$ 
and $\theta^{\So}_i(j) = 0$ for all other $j\in\calV$.
That is, 
$ \realiz{\Phi^{CPwL}}\colon \domain \to \R^{|\calV|} $
satisfies
\begin{equation*}
\realiz{\Phi^{CPwL}}_i(x)
= \, \theta^{\So}_i(x)
\quad\text{ for all } x\in\domain
\text{ and all } i\in\calV.
\end{equation*}
There exists $C>0$ independent of $d$ and $\calT$ such that
\begin{align*}
\depth(\Phi^{CPwL})
\leq &\, 8 + \log_2( \mathfrak{s}(\calV) ) + \log_2(d+1)
,
\\
\size(\Phi^{CPwL})
\leq & C \snorm{\calV} \log_2( \mathfrak{s}(\calV) ) + C d^2 \sum_{i\in\calV} s(i)
\leq C d^2 \mathfrak{s}(\calV) \dim( \So(\calT,\domain)).
\end{align*}

For all
$v = \sum_{i\in\calV} v_i \theta^{\So}_i \in\So(\calT,\domain)$,
there exists a NN $\Phi^{CPwL,v} := \Phi^{CPwL(\calT,\domain),v} $
with only \ReLU activations, such that for a constant $C>0$
independent of $d$ and $\calT$
\begin{align*}
\realiz{\Phi^{CPwL,v}}(x)
= &\, v(x)
\quad\text{ for all } x\in\domain
,
\\
\depth(\Phi^{CPwL,v})
\leq &\, 8 + \log_2( \mathfrak{s}(\calV) ) + \log_2(d+1)
,
\\
\size(\Phi^{CPwL,v})
\leq &\, C \snorm{\calV} \log_2( \mathfrak{s}(\calV) ) + C d^2 \sum_{i\in\calV} s(i)
\leq C d^2 \mathfrak{s}(\calV) \dim( \So(\calT,\domain)).
\end{align*}

The layer dimensions and the lists of activation functions of
$\Phi^{CPwL}$ and $\Phi^{CPwL,v}$ are independent of $v$ and only
depend on $\calT$ through $ \{s(i)\}_{i\in\calV} $ and
$ \snorm{\calV} = \dim(\So(\calT,\domain)) $.

The set
$\calNN(CPwL;\calT,\domain) := \{ \Phi^{CPwL,v} : v \in
\So(\calT,\domain) \}$ together with the linear operation
$\Phi^{CPwL,v} \widehat{+} \lambda\Phi^{CPwL,w} := \Phi^{CPwL,v+\lambda w}$
for all $v,w\in \So(\calT,\domain)$ and all $\lambda\in\R$ is a
vector space.
The realization map
$\realiz{\cdot}: \calNN(CPwL;\calT,\domain) \to \So(\calT,\domain)$
is a linear isomorphism.
\end{proposition}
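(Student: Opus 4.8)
The plan is to build the claimed network $\Phi^{CPwL}$ directly from the construction already available in \cite{LODSZ22_991}, and then derive $\Phi^{CPwL,v}$ from it by composing with a single affine output layer. The starting point is the observation that $\So(\calT,\domain)$ has a nodal basis $\{\theta^{\So}_i\}_{i\in\calV}$, and that on each element $K\in\calT$ the function $\theta^{\So}_i$ restricted to $K$ is either the barycentric coordinate associated to $i$ (if $i$ is a vertex of $K$) or identically zero. The key technical input from \cite[Sect.~5]{LODSZ22_991} is that each $\theta^{\So}_i$ can be written as a suitable combination of min/max of affine functions over the patch $\{K\in\calT: i\in K\}$, and that nested min/max of affine maps is exactly realizable by a \ReLU network whose depth is logarithmic in the number of affine pieces and whose size is linear in that number. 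Since $s(i) = \snorm{\{K\in\calT: i\in K\}}$ bounds the number of simplices in the patch of $i$, one gets a \ReLU network for $\theta^{\So}_i$ of depth $O(\log s(i) + \log d)$ and size $O(d^2 s(i))$; the $d^2$ factor accounts for the cost of writing down the $d$ affine functions in $\R^d$ per simplex.

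Next I would parallelize these $\snorm{\calV}$ single-output networks using Prop.~\ref{prop:parallel}, taking care of the unequal-depth issue: networks for vertices with small patches must be padded to the common depth $8 + \log_2(\mathfrak{s}(\calV)) + \log_2(d+1)$ by concatenating with identity networks, which by Prop.~\ref{prop:relutwoidnn} would normally introduce \ReLUtwo activations --- but since \ReLU itself emulates the identity on nonnegative inputs (or one uses the \ReLU identity $x = \relu(x) - \relu(-x)$), this padding stays within pure \ReLU networks, which is why the final $\Phi^{CPwL}$ has only \ReLU activations. The size of the parallelization is the sum of the individual sizes plus the padding overhead, giving $\size(\Phi^{CPwL}) \leq C\snorm{\calV}\log_2(\mathfrak{s}(\calV)) + Cd^2\sum_{i\in\calV} s(i)$; the first term is the depth-equalization cost (each of the $\snorm{\calV}$ channels padded by up to $\log_2(\mathfrak{s}(\calV))$ layers of constant per-layer size) and the second is the genuine cost of the patch emulations. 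The bound $\sum_{i\in\calV}s(i) \leq \mathfrak{s}(\calV)\snorm{\calV}$ together with $\snorm{\calV} = \dim(\So(\calT,\domain))$ then yields the stated simplified bound $\leq Cd^2\mathfrak{s}(\calV)\dim(\So(\calT,\domain))$.

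For the second part, given $v = \sum_{i\in\calV} v_i\theta^{\So}_i$, I would set $\Phi^{CPwL,v}$ to be $\Phi^{CPwL}$ with its identity output layer replaced by (equivalently, post-composed with) the affine map $y\mapsto \sum_{i\in\calV} v_i y_i$, i.e. a single $1\times\snorm{\calV}$ weight matrix. This does not change the depth, and since a dense $1\times\snorm{\calV}$ matrix contributes at most $\snorm{\calV}$ nonzeros, the size bound is unchanged up to the constant $C$. By construction $\realiz{\Phi^{CPwL,v}}(x) = \sum_i v_i\theta^{\So}_i(x) = v(x)$ on $\domain$. The layer dimensions and activation lists of both networks depend on $\calT$ only through the patch sizes $\{s(i)\}_{i\in\calV}$ and $\snorm{\calV}$, since these alone determine the min/max tree structure and the padding. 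Finally, to see that $\calNN(CPwL;\calT,\domain)$ with $\widehat{+}$ is a vector space and $\realiz{\cdot}$ is a linear isomorphism onto $\So(\calT,\domain)$: all networks $\Phi^{CPwL,v}$ share the same architecture (only the final weight vector varies), so $\widehat{+}$ and scalar multiplication act purely on that weight vector, making the map $v\mapsto \Phi^{CPwL,v}$ a linear bijection whose inverse is $\realiz{\cdot}$; linearity of $\realiz{\cdot}$ then follows because on this fixed architecture the realization is an affine-then-min/max-then-linear composition in which only the outer linear layer depends on $v$, and that dependence is linear.

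The main obstacle I expect is not any of the bookkeeping above but rather the precise exact-emulation claim for a single nodal hat function over a possibly \emph{nonconvex} patch: writing $\theta^{\So}_i$ as a nested min/max of the $s(i)$ elementwise barycentric extensions requires the geometric decomposition of nonconvex patches into convex sub-patches from \cite{LODSZ22_991} (the ``convex patch'' reduction), and verifying that the resulting expression is globally continuous and agrees with the nodal basis function --- not merely locally on each simplex --- is where the real content sits. Since Prop.~\ref{prop:CPLbasisnet} is quoted verbatim from \cite[Prop.~5.7]{LODSZ22_991}, the honest proof here is essentially a citation: the statement is established there, and the only thing to check in the present context is that the geometric meshes $\M^{(\ell)}$ of Prop.~\ref{prop:eta} are regular and simplicial, so that the hypotheses apply.
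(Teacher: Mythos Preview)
Your proposal is correct and matches the paper's approach exactly: the paper does not prove this proposition at all but simply writes ``For a proof, we refer to \cite[Sect.~5]{LODSZ22_991}.'' Your sketch of the construction (per-vertex min/max emulation of hat functions, parallelization with \ReLU identity padding, and a final linear layer for $\Phi^{CPwL,v}$) is an accurate outline of what happens in that reference, and your closing observation that ``the honest proof here is essentially a citation'' is precisely what the paper does.
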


For a proof, we refer to \cite[Sect. 5]{LODSZ22_991}.

\begin{remark}[{{ \cite[Rem. 5.2]{LODSZ22_991} }}]
\label{rem:sizebound}
It was shown in \cite[Rem. 5.2]{LODSZ22_991}
that
$\snorm{\calV} \leq \sum_{i \in \calV} s(i) \leq c(\calV,d) \snorm\calT$,
where $c(\calV,d) = d+1$ is the number of vertices of a $d$-simplex.
We obtain this inequality by observing that
$s(i) \geq 1$ and that
each element $K \in \calT$ contributes $+1$ to $c(\calV,d)$ terms $s(i)$.  
Therefore, we also have the bound
$ \size(\Phi^{CPwL}) 
	\leq C \snorm{\calV} \log_2( \mathfrak{s}(\calV) ) + Cd^2 c(\calV,d) \snorm\calT
	\leq Cd^2 c(\calV,d) \log_2( \mathfrak{s}(\calV) ) \snorm\calT$.
The same bound holds for $\size(\Phi^{CPwL,v})$.
\end{remark}

\section{NN Emulation of High-Order, Lagrangean Finite Element Spaces}
\label{sec:hofem}

We state and prove the main result of this paper: 
a \emph{mathematically exact emulation of Lagrangean FE spaces of any order} 
$p\geq 1$ 
on general regular, simplicial triangulations 
in a polytopal domain $\domain\subset \R^d$, 
in space dimension $d\geq 2$, with \ReLU and \ReLUtwo activations, 
and with both the number of neurons and
the DNN size (i.e., the number of nonzero NN parameters, called weights and biases)
in the NN required to realize the emulation being bounded by a 
constant (which depends on the domain $\domain$ and on the mesh connectivity) 
times the dimension of the corresponding FE space.
This result comprises in particular the 
$hp$-Finite Element spaces $S^p(\M^{(\ell)})$ 
on families $\MMM_{\kappa,\sigma}(\S)$ of 
nested, regular simplicial partitions 
$\M^{(\ell)}$ in $\domain$
which are $\sigma$-geometrically refined towards the set $\S$ of singular
support points and which are $\kappa$-shape regular. 
For these DNNs, 
the exponential approximation rate result \eqref{eq:eta} 
in Prop.~\ref{prop:eta} holds. 
This implies exponential rates for the DNN approximation error
for various DNN-based PDE approximation methods.

\subsection{Existing Results}
\label{sec:previoushofem}
The exact \ReLU NN emulation of continuous, piecewise linear functions
on general regular, simplicial partitions of polytopal domains
$\domain \subset \R^d$, $d\in\N$,
was achieved in \cite[Prop. 5.7]{LODSZ22_991}, 
as stated above in Prop. \ref{prop:CPLbasisnet}.
In \cite{HX2023}, it was observed that on each simplex $K\in\calT$,
each of the barycentric coordinates equals one of the 
``hat'' basis functions
$\theta^{\So}_q \in \So(\calT,\domain)$ for a vertex $q$ of $K$.
For $q\in\calV$,
these are defined by $\theta^{\So}_q(q) = 1$ 
and $\theta^{\So}_q(q') = 0$ for all other vertices $q'\in\calV$.
Based on an expression of the 
local polynomial space $\bbP_p(K)$ for $p\in\N$ 
in terms barycentric coordinates,
an explicit formula in terms of these hat functions 
for a global basis of high-order finite elements
was given in \cite{HX2023}.
We recall it in Prop. \ref{prop:hofemrepres} below.
To state the result,
on each subsimplex $K'$ of $\calT$ of 
dimension $m \in\{0,\ldots,d\}$
with vertices $a_0,\ldots,a_{m}$,
for all $p\in\N$
we define
the set of interpolation points 
\begin{align*}
\calN_p(K') := &\, \left \{ \sum_{k=0}^{m} \alpha_k a_k / p : 
	\alpha_k \in \N_0 \text{ and } \sum_{k=0}^{m} \alpha_k = p \right \}
.
\end{align*}
As nodal basis for $\bbP_p(K')$ we 
consider the Lagrange polynomials $\{ v_i \}_{i\in \calN_p(K')}$
defined by
$v_i(i) = 1$ and $v_i(j) = 0$ for all other $j\in \calN_p(K')$. 
We denote the set of all interpolation points by 
$\calN := \calN_p(\calT) := \cup_{K\in\calT} \calN_p(K)$
and
consider the global Lagrangean basis functions $\{ \theta^{\Sop{p}}_i \}_{i\in \calN}$
of $\Sop{p}(\calT,\domain)$
defined by 
$\theta^{\Sop{p}}_i(i) = 1$ and $\theta^{\Sop{p}}_i(j) = 0$ for all other $j\in \calN$.

\begin{proposition}[{{\cite[Thm. 3.2]{HX2023}}}]
\label{prop:hofemrepres}
For all $p\in\N$ and a polytopal domain $\domain \subset \R^d$, for $d\in\N$,
let $\calT$ be a regular, simplicial partition of $\domain$.
For each interpolation node $i\in\calN$,
if $i$ is a vertex of $\calT$ 
let $K' = i$ and ${m} = 0$,
and else let $K'$ be the subsimplex of $\calT$ satisfying $i\in\interior{K'}$
and let $m$ denote the dimension of $K'$.
Denoting the vertices of $K'$ by $a_0,\ldots,a_{m}$,
let 
$\alpha_0,\ldots,\alpha_{m} \in \N_0$ 
be such that 
$i = \sum_{k=0}^{m} \alpha_k a_k / p$.

Then, 
\begin{align}
\label{eq:hofemrepres}
\theta^{\Sop{p}}_i(x)
	= &\, 
		\prod_{k=0}^{m} \tfrac{1}{\alpha_k!} 
		\prod_{j=0}^{\alpha_k-1} \left ( p \theta^{\So}_{a_k}(x) - j \right )
	=
		\prod_{k=0}^{m} 
		\prod_{j=0}^{\alpha_k-1} 
			\left ( \tfrac{p}{j+1} \theta^{\So}_{a_k}(x) - \tfrac{j}{j+1} \right )
.
\end{align}
\end{proposition}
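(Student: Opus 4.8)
The plan is to verify the formula \eqref{eq:hofemrepres} pointwise on each element $K\in\calT$, using that the restriction $\theta^{\So}_{a}|_K$ of a hat function to an element $K$ containing the vertex $a$ is exactly the barycentric coordinate $\lambda_a$ of $K$ associated with $a$, while $\theta^{\So}_a|_K\equiv 0$ if $a$ is not a vertex of $K$. Fix an interpolation node $i\in\calN$ with associated subsimplex $K'$ (of dimension $m$) and multi-index $(\alpha_0,\dots,\alpha_m)$ with $\sum_k\alpha_k=p$. First I would define the candidate function $w$ on $\domain$ by the right-hand side of \eqref{eq:hofemrepres}, and observe that $w$ is continuous on $\overline\domain$ (being a polynomial in the globally continuous functions $\theta^{\So}_{a_k}$) and piecewise polynomial of degree $\sum_k\alpha_k=p$, hence $w\in\Sop{p}(\calT,\domain)$. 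Since the nodal basis is uniquely characterized by its values on $\calN$, it then suffices to check that $w(j)=\delta_{ij}$ for every $j\in\calN$.

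To evaluate $w$ at a node $j\in\calN$, let $K\in\calT$ be an element with $j\in K$. Two cases arise. If $K$ does not contain all of the vertices $a_0,\dots,a_m$ of $K'$, then some factor $\theta^{\So}_{a_k}$ vanishes identically on $K$; since $\alpha_k\geq 1$ (as $K'$ is the minimal subsimplex containing $i$ in its interior, each $\alpha_k$ is positive) the corresponding product $\prod_{j=0}^{\alpha_k-1}(p\theta^{\So}_{a_k}-j)$ contains the factor with index $j=0$, namely $p\theta^{\So}_{a_k}$, which is $0$ on $K$; hence $w|_K\equiv 0$, and in particular $w(j)=0$. Note that in this case $j$ cannot equal $i$, because $i\in\interior K'\subset K'$ forces any element containing $i$ to contain all vertices of $K'$. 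If instead $K\supset\{a_0,\dots,a_m\}$, then on $K$ we have $\theta^{\So}_{a_k}|_K=\lambda_{a_k}$, the barycentric coordinate, and a node $j\in K\cap\calN$ has barycentric coordinates $\lambda_{a_k}(j)=\beta_k/p$ for nonnegative integers $\beta_k$ summing to $p$. Substituting, $w(j)=\prod_{k=0}^m\frac1{\alpha_k!}\prod_{l=0}^{\alpha_k-1}(\beta_k-l)$, which equals $\prod_k\binom{\beta_k}{\alpha_k}$. This is $1$ precisely when $\beta_k=\alpha_k$ for all $k$ with $\alpha_k\geq 1$, i.e.\ when the barycentric representation of $j$ in $K$ matches that of $i$; and it vanishes as soon as some $\beta_k<\alpha_k$. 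A short argument (using $\sum_k\beta_k=p=\sum_k\alpha_k$ over the vertices of $K'$, and $\beta_k$ for the remaining vertices of $K$ being forced to $0$) shows $\prod_k\binom{\beta_k}{\alpha_k}=\delta_{ij}$, completing the verification.

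The second equality in \eqref{eq:hofemrepres} is purely algebraic: distributing the factor $\tfrac1{\alpha_k!}=\prod_{j=0}^{\alpha_k-1}\tfrac1{j+1}$ across the $\alpha_k$ terms of $\prod_{j=0}^{\alpha_k-1}(p\theta^{\So}_{a_k}-j)$ gives $\prod_{j=0}^{\alpha_k-1}\bigl(\tfrac{p}{j+1}\theta^{\So}_{a_k}-\tfrac{j}{j+1}\bigr)$, and this rewriting is worth stating since it is the form used when building the emulating network (each factor is an affine pullback of $\relu^2$-emulable products with coefficients of controlled size).

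I expect the main subtlety, rather than a genuine obstacle, to be bookkeeping about subsimplices: one must carefully argue that for $i\in\calN$ the associated $K'$ is well-defined as the \emph{unique} minimal subsimplex with $i$ in its relative interior, that all $\alpha_k$ attached to its vertices are strictly positive (so the $j=0$ factor is available to kill contributions from elements missing a vertex of $K'$), and that every element $K$ containing $i$ contains $K'$. Everything else reduces to the elementary identity $\prod_k\binom{\beta_k}{\alpha_k}=\delta_{ij}$ under the constraints $\sum\alpha_k=\sum\beta_k=p$, which is the standard fact that the tensor-product Bernstein/Lagrange construction on a simplex yields a nodal basis.
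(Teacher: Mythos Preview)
The paper does not supply its own proof of this proposition; it is quoted verbatim as \cite[Thm.~3.2]{HX2023} and used as a black box. Your argument is correct and is essentially the standard verification: show the right-hand side lies in $\Sop{p}(\calT,\domain)$ and then check the nodal property $w(j)=\delta_{ij}$ for all $j\in\calN$, splitting into the two cases according to whether the element $K\ni j$ contains $K'$ or not. One small wording slip: when you introduce the $\beta_k$ you say they are ``nonnegative integers summing to $p$'', but in fact $\sum_{k=0}^m\beta_k\le p$ because $K$ may have vertices outside $K'$; you implicitly correct this in the next sentence, and the inequality is exactly what drives the pigeonhole step ($\beta_k\ge\alpha_k$ for all $k$ forces $\sum\beta_k\ge p$, hence equality, hence $\beta_k=\alpha_k$ and $j=i$). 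With that clarification the proof is complete.
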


From this, the following \ReLU NN emulation rate result is obtained in \cite{HX2023}.

\begin{proposition}[{{\cite[Thm. 4.2]{HX2023}}}]
\label{prop:hxreluhofem}
For all $p\in\N$ and a polytopal domain $\domain \subset \R^d$, for $d\in\N$,
let $\calT$ be a regular, simplicial partition of $\domain$.

For all $v\in\Sop{p}(\calT,\domain)$,
there exists a NN $\Phi_{v}$
with \ReLU and \ReLUtwo activation,
such that 
$\realiz{\Phi_v} = v$.
The depth $L = \depth(\Phi_v) $ 
and the numbers of neurons $N_1,\ldots,N_{L-1}$ in the hidden layers satisfy
for some $C>0$ independent of $\calT$
\begin{align*}
\depth(\Phi_v) 
	\leq &\, \left ( \ceil{ \log_2(\mathfrak{s}(\calV)) } 
		+ \ceil{ \log_2(d+1) } 
		+ \ceil{ \log_2(p) } 
		+ 7 \right )
		\snorm{\calN}
	,
	\\
\max_{\ell=1}^{L-1} N_\ell 
	\leq &\, C \max\{ p, (d+1)\min\{d+1 , p+1\}\mathfrak{s}(\calV) \} 
.
\end{align*}

\end{proposition}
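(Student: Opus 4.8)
The plan is to lift the pointwise identity \eqref{eq:hofemrepres} of Prop.~\ref{prop:hofemrepres} to the level of neural networks, one nodal basis function at a time, and then to sum the resulting subnetworks along a running accumulator. First I would fix an enumeration $\calN = \{i_1,\dots,i_n\}$, $n = \snorm{\calN}$, write $v = \sum_{l=1}^{n} v_{i_l}\,\theta^{\Sop{p}}_{i_l}$, and split the argument into two parts: (a) for each $i\in\calN$ the construction of a subnetwork $\Psi_i$ with \ReLU and \ReLUtwo activations such that $\realiz{\Psi_i} = \theta^{\Sop{p}}_i$, with $\depth(\Psi_i) \le \ceil{\log_2(\mathfrak{s}(\calV))} + \ceil{\log_2(d+1)} + \ceil{\log_2(p)} + C$ and with hidden-layer widths at most $C\max\{p,(d+1)\min\{d+1,p+1\}\mathfrak{s}(\calV)\}$; and (b) an assembly step producing $\Phi_v$ with $\realiz{\Phi_v} = v$ whose depth and widths are those claimed. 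Part (a) is where the structure \eqref{eq:hofemrepres} enters; part (b) is pure neural-network calculus.

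For part (a): Prop.~\ref{prop:hofemrepres} exhibits $\theta^{\Sop{p}}_i$ as a product of exactly $\sum_{k=0}^{m}\alpha_k = p$ affine functions of the hat functions $\theta^{\So}_{a_0},\dots,\theta^{\So}_{a_m}$; since an empty inner product equals $1$, only the indices $k$ with $\alpha_k\ge 1$ contribute, and there are at most $\min\{m+1,p\}\le\min\{d+1,p\}$ of them. I would build $\Psi_i$ as a sparse \ReLUtwonospace-concatenation (Prop.~\ref{prop:relutwoconcat}) of three blocks: first, a \ReLU block emulating in parallel exactly those $\le\min\{d+1,p\}$ hat functions, extracted from the local, per-vertex variant of the CPwL emulation of \cite{LODSZ22_991} underlying Prop.~\ref{prop:CPLbasisnet} (depth $\le 8 + \log_2(\mathfrak{s}(\calV)) + \log_2(d+1)$, width $O((d+1)\mathfrak{s}(\calV))$ per hat, so $O((d+1)\min\{d+1,p+1\}\mathfrak{s}(\calV))$ in total); second, one affine map, to be absorbed into the next block, producing the $p$ scalars $\tfrac{p}{j+1}\theta^{\So}_{a_k}(x) - \tfrac{j}{j+1}$; third, the product network $\Phi^{\mathrm{prod}}_p$ of Prop.~\ref{prop:nnprod} (depth $O(\log_2 p)$, width $O(p)$). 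Concatenation adds depths and maximizes widths, so $\depth(\Psi_i)$ and the widths of $\Psi_i$ have the asserted form with a universal additive constant; the explicit constant $7$ and the precise coefficients follow from tracking the overhead in Prop.~\ref{prop:relutwoconcat} and a careful multiplication-tree realization of $\Phi^{\mathrm{prod}}_p$.

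For part (b): I would carry the input $x\in\R^d$ together with a scalar accumulator through the network and build $\Phi_v$ inductively. Let $\Phi^{(0)}$ return $(x,0)$, and let $\Phi^{(l)}$ return $(x,\sum_{l'\le l} v_{i_{l'}}\theta^{\Sop{p}}_{i_{l'}}(x))$, where $\Phi^{(l-1)}\mapsto\Phi^{(l)}$ is the parallelization (Prop.~\ref{prop:parallel}/Prop.~\ref{prop:parallSep}, padding the shorter branch by the \ReLUtwo identity of Prop.~\ref{prop:relutwoidnn} and the \ReLU identity $x\mapsto\relu(x)-\relu(-x)$ so that the layers and activations match) of the identity on $(x,\text{accumulator})$ with the scaled subnetwork $v_{i_l}\Psi_{i_l}$, followed by one linear layer adding the new contribution into the accumulator; finally $\Phi_v$ is $\Phi^{(n)}$ with its output projected onto the accumulator. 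Each stage raises the depth by $\depth(\Psi_{i_l})$ plus an $O(1)$ concatenation overhead, and leaves the width equal to that of a single active $\Psi_{i_l}$ layer plus $O(d)$ for the passthrough; summing over the $n=\snorm{\calN}$ stages yields exactly the asserted bounds. Exactness $\realiz{\Phi_v}=v$ is immediate, since hat functions, products, identities and sums are all emulated without error.

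The step I expect to be the main obstacle is the first block of part (a). Prop.~\ref{prop:CPLbasisnet} as quoted only records \emph{global} size bounds for $\Phi^{CPwL,v}$ (involving $\snorm{\calV}$ and $\sum_i s(i)$), whereas here one needs a genuinely local emulation — of a single hat function $\theta^{\So}_q$, equivalently of the $\le\min\{d+1,p\}$ hats attached to one subsimplex — whose width is controlled by $(d+1)\mathfrak{s}(\calV)$ alone, with no factor $\snorm{\calV}$ or $\snorm{\calN}$. Establishing this requires opening up the patch-based construction of \cite[Sect.~4--5]{LODSZ22_991} and checking that restricting it to the star of one vertex $q$ costs only $O(d^2 s(q))$ neurons in $O(\log_2(s(q)) + \log_2(d+1))$ layers. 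Everything else is routine and amounts to bookkeeping of the constants.
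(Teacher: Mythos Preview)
Your proposal is correct and follows essentially the same approach as the one the paper attributes to \cite{HX2023} in Rem.~\ref{rem:hx2023discussion}: first emulate each nodal basis function $\theta^{\Sop{p}}_i$ by concatenating a local \ReLU block for the $m+1\le\min\{d+1,p+1\}$ relevant hat functions with a $\ceil{\log_2 p}$-deep product tree, then stack these subnetworks sequentially with a $(2d{+}1)$-wide passthrough/accumulator. You have also correctly identified the one nontrivial ingredient, namely the per-vertex width and depth bounds for a single hat function $\theta^{\So}_q$ (depth $\le \ceil{\log_2 s(q)}+\ceil{\log_2(d+1)}+7$, layer widths $O((d+1)s(q))$), which indeed requires going back into the patch construction of \cite[Sect.~4--5]{LODSZ22_991} rather than invoking Prop.~\ref{prop:CPLbasisnet} as stated.
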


\begin{remark}
\label{rem:hx2023discussion}
The proof of \cite[Thm. 4.2]{HX2023}
is based on \cite[Lem. 4.2]{HX2023},
in which single Lagrangean basis functions are emulated.
For each $p\in\N$ and each interpolation point $i\in\calN$,
let $K'$, $m$, $a_0,\ldots,a_m$ and $\alpha_0,\ldots,\alpha_m$ 
be as in Prop. \ref{prop:hofemrepres}.

The first part of the network in parallel emulates the CPwL ``hat'' functions
$\{ \theta^{\So}_{a_k} \}_{k=0}^{m}$
by subnetworks of depth $L_k \leq \ceil{\log_2(s(a_k))} + \ceil{\log_2(d+1)} + 7$
and numbers of neurons per layer 
$N_j \leq C (d+1) s(a_k) 2^{-j}$ for $j=1,\ldots,L_k$
for some constant $C>0$ independent of $\calT$.\footnote{
The value $C = 2^3$ stated in \cite[Lem. 4.2]{HX2023} gives $N_{L_k}<1$, 
which appears inconsistent.}
The parallelization of these $m+1$ networks
has depth 
$L \leq \max_{k=0}^{m} \ceil{\log_2(s(a_k))} + \ceil{\log_2(d+1)} + 7$
and numbers of neurons per layer
$N_j \leq C (d+1) 2^{-j} \sum_{k=0}^{m} s(a_k)$ for $j=1,\ldots,L$.

The second part of the network emulates the product of $p$ factors
stated in \eqref{eq:hofemrepres}.
In \cite[Lem. 4.2]{HX2023},
this is realized with $\ceil{\log_2(p)}$ layers.
Using a binary tree of product subnetworks,
the numbers $N_j$
of neurons per layer of this binary tree
are bounded as 
\footnote{
In \cite[Lem. 4.2]{HX2023},
the formula for the numbers of neurons per layer is incorrect,
as it does not take into account the numbers of neurons in these last $\ceil{\log_2(p)}$ layers.
In fact, for those layers, \cite[Eq. (4.11)]{HX2023} states that $N_j < 1$.}
\[
N_j \leq C p 2^{-j} \quad \mbox{for} \quad  j=1,\ldots,\ceil{\log_2(p)}.
\]

The concatenation of both parts has depth at most
\[
\max_{k=0}^{m} \ceil{\log_2(s(a_k))} + \ceil{\log_2(d+1)} + \ceil{\log_2(p)} + 7
\]
and numbers of neurons bounded by 
\[
N_j \leq C (d+1) 2^{-j} \sum_{k=0}^{m} s(a_k) 
\quad \mbox{for} \quad 
j=1,\ldots,\max_{k=0}^{m} \ceil{\log_2(s(a_k))} + \ceil{\log_2(d+1)} + 7
\]
and for the last $\ceil{\log_2(p)}$ layers
\[
N_{L-\ceil{\log_2(p)}+j} \leq C p 2^{-j} \quad \mbox{for} \quad  j=1,\ldots,\ceil{\log_2(p)},
\]
i.e.
$\max_{j=1}^L N_j \leq C \max\{ p, (d+1) \sum_{k=0}^{m} s(a_k) \}$.
For $i \in \calV$ it holds that $m = 0 \leq p$,
and for interpolation points that are not vertices,
by definition of $K'$ it holds that $i \in \interior{K'}$,
thus $i = \sum_{k=0}^{m} \alpha_k a_k / p$ 
for $\alpha_k\in\N$, i.e. none of the $\alpha_k$ vanishes.
From $\sum_{k=0}^{m} \alpha_k = p$ it thus follows that $m \leq p$.
By definition, it also holds that $m \leq d$,
thus we can estimate 
$\sum_{k=0}^{m} s(a_k) 
	\leq (m+1) \mathfrak{s}(\calV)
	\leq \min\{ d+1, p+1\} \mathfrak{s}(\calV)$.

To obtain an approximation of $v\in \Sop{p}(\calT,\domain)$,
it remains to take the linear combination of the Lagrangean basis functions 
for all interpolation points $i\in\calN$.
In \cite[Thm. 4.2]{HX2023}, 
this is done by placing these subnetworks in subsequent groups of layers,
increasing the depth and not the width.
In each layer, the numbers of neurons per layer is increased by $2d+1$ 
in order to forward the inputs to the hidden layers
and to keep track of the partial sums of the outputs of the subnetworks,
see \cite[Properties 4.2]{HX2023}.
This gives the bounds on the width and depth in Prop. \ref{prop:hxreluhofem}.
\end{remark}

\begin{remark}
\label{rem:hx2023complexity}
For fixed $\calT$ and $p\to\infty$, 
it holds that $\snorm\calN \simeq p^d$,
thus the total number of neurons is $\sum_{j=0}^L N_j = O(p^{d+1}\log(p))$,
which is larger than the number of degrees of freedom,
which equals $\snorm\calN \simeq p^d$.
The network size may be even larger,
as w.l.o.g. it is bounded from below by $\sum_{j=1}^L N_j$,
see Rem. \ref{rem:numberofneurons}.
\end{remark}
%
\subsection{DNN Emulation of $hp$-FE spaces on Regular Triangulations}
\label{sec:efficienthofem}
The NN emulation results in \cite{HX2023} 
focused on the NN size bounds 
for Lagrangean Finite Elements with nodal bases,
at fixed, uniform, polynomial degree $p\geq 1$.
Here, we focus on NN size bounds which are explicit 
in terms of $p\geq 1$ and tighter than those in \cite{HX2023}.
This is achieved by a suitably modified NN architecture
for the numerical realization of the shape functions.

Specifically, the emulation of all shape functions 
$\{ \theta^{\Sop{p}}_i \}_{i\in\calN}$
can be obtained more efficiently 
when we rearrange the part of the network that computes
the products in \eqref{eq:hofemrepres}.
Rather than computing for each Lagrangean basis function 
the product of $p$ factors from \eqref{eq:hofemrepres},
we first compute for all nodes $q\in\calV$ the 
real-valued quantities
\begin{align}
\label{eq:defwalpha}
w_\alpha( \theta^{\So}_{q}(x) ) 
	:= &\, \prod_{j=0}^{\alpha-1} 
	\left ( \tfrac{p}{j+1} \theta^{\So}_{q}(x) - \tfrac{j}{j+1} \right )
	\qquad
	\text{ for }\alpha=1,\ldots,p
	,
\end{align}
where $w_\alpha\in\bbP_\alpha \subset \bbP_p$.
For all $i\in\calN$,
Eq. \eqref{eq:hofemrepres} can then be rewritten as 
\begin{align}
\label{eq:hofemrepreswalpha}
\theta^{\Sop{p}}_i(x)
	= &\, 
		\prod_{k=0}^{m} w_{\alpha_{m}}( \theta^{\So}_{a_{m}}(x) )
.
\end{align}

\begin{remark}\label{rmk:pbds}
For all $i\in\calN$,
the number of factors in \eqref{eq:hofemrepreswalpha}
is ${m}+1 \leq d+1$,
one for each of the vertices $a_0,\ldots,a_{m}$.
This is independent of $p$
and leads to a network size of the order $O(p^d)$, 
rather than a number of neurons of the order 
$O(p^{d+1}\log(p))$ 
in \cite[Thm. 4.2]{HX2023} (see Rem. \ref{rem:hx2023complexity})
and an even larger network size.
\end{remark}

\begin{proposition}
\label{prop:reluhofem}
For all $p\in\N$ and a polytopal domain $\domain \subset \R^d$, 
for $2\leq d \in\N$, 
let $\calT$ be a regular, simplicial partition of $\domain$.

There exists a NN $\Phi^{CPwP_p} := \Phi^{CPwP_p(\calT,\domain)}$
with only \ReLU and \ReLUtwo activation
which in parallel emulates the shape functions 
$\{ \theta^{\Sop{p}}_i \}_{i\in \calN}$,
i.e. 
$\realiz{ \Phi^{CPwP_p} } : \domain \to \R^{\snorm\calN}$ 
satisfies
\begin{align*}
\realiz{ \Phi^{CPwP_p} }_i(x) = \theta^{\Sop{p}}_i(x), 
	\qquad
	\text{ for all }
	i\in\calN \text{ and } x\in\domain
	.
\end{align*}
The network depth and size satisfy
for some $C>0$ independent of $\calT$, $d$ and $p$
\begin{align*}
\depth(\Phi^{CPwP_p}) 
	\leq &\, C ( \log_2(p+1) + \log_2(d+1) + \log_2(\mathfrak{s}(\calV)) + 1 )
	,
	\\
\size(\Phi^{CPwP_p})
	\leq &\, C \Big( d \snorm\calN + p^2 \snorm\calV 
		+ \snorm{\calV} \log_2( \mathfrak{s}(\calV) ) + d^2 \sum_{q\in\calV} s(q) \Big)
	\\
	\leq &\, C \big( d \snorm\calN + p^2 \snorm\calV 
		+ d^2 \mathfrak{s}(\calV) \snorm\calV \big)
.
\end{align*}

For all
$v = \sum_{i\in\calN} v_i \theta^{\Sop{p}}_i \in\Sop{p}(\calT,\domain)$,
there exists a NN $\Phi^{CPwP_p,v} := \Phi^{CPwP_p(\calT,\domain),v} $
with only \ReLU and \ReLUtwo activations, such that for a constant $C>0$
independent of $\calT$, $d$ and $p$
\begin{align*}
\realiz{\Phi^{CPwP_p,v}}(x)
= &\, v(x)
\quad\text{ for all } x\in\domain
,
\\
\depth(\Phi^{CPwP_p,v})
\leq &\, C ( \log_2(p+1) + \log_2(d+1) + \log_2(\mathfrak{s}(\calV)) + 1 )
,
\\
\size(\Phi^{CPwP_p,v})
	\leq &\, C \Big( d \snorm\calN + p^2 \snorm\calV 
		+ \snorm{\calV} \log_2( \mathfrak{s}(\calV) ) + d^2 \sum_{q\in\calV} s(q) \Big)
	\\
	\leq &\, C \big( d \snorm\calN + p^2 \snorm\calV 
		+ d^2 \mathfrak{s}(\calV) \snorm\calV \big)
	.
\end{align*}

The layer dimensions and the lists of activation functions of
$\Phi^{CPwP_p}$ and $\Phi^{CPwP_p,v}$ are 
independent of $v$ and only
depend on $p$, and on $\calT$ through 
$\{m(i)\}_{i\in\calN}$, $\snorm\calN$,
$\{s(q)\}_{q\in\calV}$ and $\snorm\calV$.
In the first layers, only \ReLU activation is applied.
The number of these \ReLU layers is at most
$7 + \log_2( \mathfrak{s}(\calV) ) + \log_2(d+1)$.
The remaining hidden layers are strict \ReLUtwo layers, i.e., 
only \ReLUtwo activation is applied.
Their number is bounded by $C ( \log_2(d+1) + \log_2(p+1) )$ 
for an absolute constant $C>0$.

The set
$\calNN(CPwP_p;\calT,\domain) := \{ \Phi^{CPwP_p,v} : v \in
\Sop{p}(\calT,\domain) \}$ together with the linear operation
$\Phi^{CPwP_p,v} \widehat{+} \lambda\Phi^{CPwP_p,w} := \Phi^{CPwP_p,v+\lambda w}$
for all $v,w\in \Sop{p}(\calT,\domain)$ and all $\lambda\in\R$ is a
vector space.
The realization map
$\realiz{\cdot}: \calNN(CPwP_p;\calT,\domain) \to \Sop{p}(\calT,\domain)$
is a linear isomorphism.
\end{proposition}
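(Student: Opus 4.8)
The plan is to realise $\Phi^{CPwP_p}$ as a composition of three subnetworks dictated by the factorisation \eqref{eq:hofemrepreswalpha}, glued with the NN calculus of Sect.~\ref{sec:nnbasics}, as anticipated in Rem.~\ref{rmk:pbds}. \emph{Stage A} is the pure \ReLU network $\Phi^{CPwL}=\Phi^{CPwL(\calT,\domain)}$ of Prop.~\ref{prop:CPLbasisnet}, which outputs in parallel the hat functions $(\theta^{\So}_q(x))_{q\in\calV}$ with depth $\le 8+\log_2(\mathfrak{s}(\calV))+\log_2(d+1)$ and size $\le C\snorm\calV\log_2(\mathfrak{s}(\calV))+Cd^2\sum_{q\in\calV}s(q)$. \emph{Stage B} turns each scalar $\theta^{\So}_q(x)$ into the vector $(w_1(\theta^{\So}_q(x)),\dots,w_p(\theta^{\So}_q(x)))$ from \eqref{eq:defwalpha}: each $w_\alpha\in\bbP_\alpha$ is univariate, so Prop.~\ref{prop:univarpol} gives a \ReLUtwo NN $\Phi^{w_\alpha}$ of depth $\le C\log_2(p+1)$ and size $\le C\alpha$; I would pad these to a common depth $\lceil C\log_2(p+1)\rceil$ with a \ReLUtwo identity network (Prop.~\ref{prop:relutwoidnn}, Prop.~\ref{prop:relutwoconcat}) acting on the scalar output, parallelise the $p$ of them over $\alpha$ (Prop.~\ref{prop:parallel}, common input), and full-parallelise over $q\in\calV$ (Prop.~\ref{prop:parallSep}), at cost $\le Cp^2\snorm\calV$ in size. \emph{Stage C} forms, for each $i\in\calN$, the product of the $m(i)+1\le d+1$ coordinates $w_{\alpha_k}(\theta^{\So}_{a_k}(x))$ appearing in \eqref{eq:hofemrepreswalpha}: I would take $\Phi^{\mathrm{prod}}_{m(i)+1}$ from Prop.~\ref{prop:nnprod} (the identity on one input when $i\in\calV$), pad it to the common depth $\le C\log_2(d+1)$ on its scalar output, full-parallelise over $i\in\calN$, and absorb into its first layer the $0$-$1$ affine ``read-off'' map $\R^{p\snorm\calV}\to\R^{\sum_i(m(i)+1)}$ that selects for every $i$ the coordinates $(a_k,\alpha_k)$ (at most $(d+1)\snorm\calN$ nonzeros); this costs $\le Cd\snorm\calN$ in size. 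Because Stage~C uses only Stage~B's outputs and Stage~B only Stage~A's, no values need to be forwarded across stages.

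Then I would set $\Phi^{CPwP_p}:=\Phi^{C}\sconc_{\relu^2}\Phi^{B}\sconc_{\relu^2}\Phi^{A}$ (Prop.~\ref{prop:relutwoconcat}), where $\Phi^A,\Phi^B,\Phi^C$ are the three stages. No error is incurred in the sparse concatenations, so by Prop.~\ref{prop:hofemrepres} one has $\realiz{\Phi^{CPwP_p}}_i=\theta^{\Sop{p}}_i$ on $\domain$. The only \ReLU hidden layers are those of $\Phi^A$, hence at most $7+\log_2(\mathfrak{s}(\calV))+\log_2(d+1)$ of them; Stages~B,~C and the concatenation glue are strictly \ReLUtwonospace, contributing $\le C(\log_2(d+1)+\log_2(p+1))$ layers, independent of $\calT$. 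Summing depths with the $O(1)$ concatenation overheads yields the claimed depth bound. For the size, each sparse concatenation charges a fixed constant times the sum of the summand sizes (Prop.~\ref{prop:relutwoconcat}), so $\size(\Phi^{CPwP_p})\le C\big(d\snorm\calN+p^2\snorm\calV+\snorm\calV\log_2(\mathfrak{s}(\calV))+d^2\sum_{q\in\calV}s(q)\big)$; since $\log_2(p+1)\le p$ and $\log_2(d+1)\le d$ the identity-padding overheads are absorbed, and using $s(q)\le\mathfrak{s}(\calV)$ and $\log_2(\mathfrak{s}(\calV))\le\mathfrak{s}(\calV)$ this simplifies to $C\big(d\snorm\calN+p^2\snorm\calV+d^2\mathfrak{s}(\calV)\snorm\calV\big)$, as stated; by Rem.~\ref{rem:numberofneurons} the same bounds control the number of neurons.

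For $\Phi^{CPwP_p,v}$ with $v=\sum_{i\in\calN}v_i\theta^{\Sop{p}}_i$, I would postcompose $\Phi^{CPwP_p}$ with the linear functional $y\mapsto\sum_{i\in\calN}v_iy_i$, merging it into the last weight matrix; this changes neither the depth nor the layer dimensions nor the activation lists, raises the size by at most $\snorm\calN$, and gives $\realiz{\Phi^{CPwP_p,v}}=v$ with the stated bounds. Since all these networks share the same layer dimensions and activation lists and differ only in a last weight matrix depending linearly on $(v_i)_{i\in\calN}$, the operation $\Phi^{CPwP_p,v}\widehat{+}\lambda\Phi^{CPwP_p,w}:=\Phi^{CPwP_p,v+\lambda w}$ is well defined, $\calNN(CPwP_p;\calT,\domain)$ is a vector space, $\realiz{\cdot}$ is linear, and it is a bijection onto $\Sop{p}(\calT,\domain)$ because $\{\theta^{\Sop{p}}_i\}_{i\in\calN}$ is a basis.

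The hard part will be the routing and depth-equalisation bookkeeping rather than any single estimate: getting the shared Stage~A outputs to the correct univariate-polynomial and product subnetworks through the affine copy/select maps together with the (full) parallelisations, and padding the unequal-depth summands inside Stages~B and~C with \ReLUtwo identity networks applied to their \emph{collapsed, scalar} outputs (so the padding cost is only $O(\log_2(p+1))$, resp. $O(\log_2(d+1))$, per summand and is absorbed into the $p^2\snorm\calV$, resp. $d\snorm\calN$, term), all the while tracking which layers carry \ReLU versus \ReLUtwo activation through the sparse concatenations in order to certify the precise $7+\log_2(\mathfrak{s}(\calV))+\log_2(d+1)$ count of \ReLU layers and the $\calT$-independence of the \ReLUtwo layer count.
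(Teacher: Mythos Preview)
Your proposal is correct and follows essentially the same approach as the paper: the paper decomposes $\Phi^{CPwP_p}$ into four subnetworks $\Phi^{CPwP_p}_{(1)}\sconc_{\relu^2}\Phi^{CPwP_p}_{(2)}\sconc_{\relu^2}\Phi^{CPwP_p}_{(3)}\sconc_{\relu^2}\Phi^{CPwP_p}_{(4)}$, where $\Phi^{CPwP_p}_{(4)}=\Phi^{CPwL}$ is your Stage~A, $\Phi^{CPwP_p}_{(3)}$ is exactly your Stage~B (same padding with scalar \ReLUtwo identity networks), and your Stage~C absorbs the depth-$1$ selection map $\Phi^{CPwP_p}_{(2)}$ into the first layer of the full-parallelised product networks $\Phi^{CPwP_p}_{(1)}$. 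The construction of $\Phi^{CPwP_p,v}$ via $\bullet$-concatenation with the row vector $(v_i)_{i\in\calN}$ and the vector-space argument are identical; the paper in fact argues that this merging does not increase the size at all (each output weight is an inner product with a column of the old output matrix), which is slightly sharper than your ``raises the size by at most $\snorm\calN$'' but both suffice for the stated bounds.
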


\begin{proof}
For each $p\in\N$ and each interpolation point $i\in\calN$,
let 
$m$,
$a_0,\ldots,a_{m}$
and
$\alpha_0,\ldots,\alpha_{m}$
be as in Prop. \ref{prop:hofemrepres}.
In the remainder of this proof, we will denote them by 
${m}(i)$,
$a_0(i),\ldots,a_{{m}(i)}(i)$
and
$\alpha_0(i),\ldots,\alpha_{{m}(i)}(i)$.

This proof consists of three steps.
In Step 1, we construct $\Phi^{CPwP_p}$ and prove the formula for its realization.
in Step 2, we prove the bounds on the network depth and size of $\Phi^{CPwP_p}$.
The remainder of the statements is proved in Step 3.

\textbf{Step 1.}
First, we recall several NNs which we will use in our construction.
From Prop. \ref{prop:CPLbasisnet} we recall 
$\Phi^{CPwL} := \Phi^{CPwL(\calT,\domain)}$,
whose depth and size are bounded by 
$$
\depth(\Phi^{CPwL}) \leq 8 + \log_2( \mathfrak{s}(\calV) ) + \log_2(d+1),
\qquad
\size(\Phi^{CPwL}) \leq \snorm{\calV} \log_2( \mathfrak{s}(\calV) ) + d^2 \sum_{q\in\calV} s(q).
$$

We consider NN emulations of the polynomials $\{ w_\alpha \}_{\alpha=1}^p \in \bbP_p$ 
defined in \eqref{eq:defwalpha}.
By sparsely concatenating
these networks $\{ \Phi^{w_\alpha} \}_{\alpha=1}^p$
with a \ReLUtwo identity network from Prop. \ref{prop:relutwoidnn}
using Prop. \ref{prop:relutwoconcat},
we obtain that for all $p\in\N$ and $\alpha=1,\ldots,p$,
there exists a \ReLUtwo NN $\Phi^{w_\alpha}_p$ such that 
$\realiz{\Phi^{w_\alpha}_p} = w$,
$\depth(\Phi^{w_\alpha}_p) = c \ceil{\log_2(p+1)}$ 
and 
$\size(\Phi^{w_\alpha}_p) \leq C p$,
for constants $c\in\N$ and $C>0$ independent of $p$ and $\alpha=1,\ldots,p$.
We now prove this.

Let $c\in\N$ be such that
$\depth( \Phi^{w_\alpha} ) < c \ceil{\log_2(p+1)} $ for all $\alpha=1,\ldots,p$.
With 
$L_\alpha 
	:= c \ceil{\log_2(p+1)} - \depth( \Phi^{w_\alpha} ) 
	\leq c \ceil{\log_2(p+1)}$
we then define 
$\Phi^{w_\alpha}_p 
	:= \Phi^{\Id,\relu^2}_{1,L_\alpha} \sconc_{\relu^2} \Phi^{w_\alpha}$
for all $\alpha=1,\ldots,p$.
We obtain that 
$\realiz{ \Phi^{w_\alpha}_p } = \realiz{ \Phi^{w_\alpha} } = w_\alpha$,
$\depth( \Phi^{w_\alpha}_p )
	= L_\alpha + \depth( \Phi^{w_\alpha} ) = c \ceil{\log_2(p+1)}$
and
$\size( \Phi^{w_\alpha}_p )
	\leq C \size( \Phi^{\Id,\relu^2}_{1,L_\alpha} ) + C \size( \Phi^{w_\alpha} )
	\leq C L_\alpha + C (\alpha+1)
	\leq C p$,
for a constant $C>0$ independent of $\alpha$ and $p$.

Analogously, we obtain from Prop. \ref{prop:nnprod} the following result.
For all $d\in\N$,
there exist \ReLUtwo NNs $\{ \Phi^{\mathrm{prod}}_{\ell,d+1} \}_{\ell=1}^{d+1}$ 
such that 
$\realiz{\Phi^{\mathrm{prod}}_{\ell,d+1}}(x_1,\ldots,x_\ell) = \prod_{j=1}^\ell x_j$ 
for all $x_1,\ldots,x_\ell\in\R$,
and such that their NN depths and sizes satisfy
$\depth(\Phi^{\mathrm{prod}}_{\ell,d+1}) = c \ceil{\log_2(d+1)}$ 
and 
$\size(\Phi^{\mathrm{prod}}_{\ell,d+1}) \leq C d$,
for constants $c\in\N$ and $C>0$ independent of $d$.

We can now define
\begin{align*}
\Phi^{CPwP_p} := &\, 
	\Phi^{CPwP_p}_{(1)} \sconc_{\relu^2} \Phi^{CPwP_p}_{(2)} \sconc_{\relu^2} \Phi^{CPwP_p}_{(3)} \sconc_{\relu^2} \Phi^{CPwP_p}_{(4)}
	\\
	:= &\, \FParallelc{ \left \{ \Phi^{\mathrm{prod}}_{{m}(i)+1,d+1} \right \}_{i\in\calN} } 
	\sconc_{\relu^2} \Phi^{CPwP_p}_{(2)}
	\sconc_{\relu^2} \FParallelc{ \left \{ 
	\Parallelc{ \left \{ \Phi^{w_\alpha}_p \right \}_{\alpha=1}^p } 
	\right \}_{q\in\calV} }
	\sconc_{\relu^2} \Phi^{CPwL}
	,
\end{align*}
where
$\Phi^{CPwP_p}_{(2)}$ is a NN of depth $1$ emulating a linear transformation
$x\mapsto Ax$.
Fixing a bijection $n:\calN\to\{1,\ldots,\snorm\calN\}$,
the weight matrix $A$ 
of size $\big( \sum_{i\in\calN} ( m(i)+1 ) \big) \times p \snorm\calV$
is defined by
\begin{align*}
A_{{j_1},{j_2}} 
= 
\begin{cases}
1 & \text{ if }
j_1  = k + 1 + \sum_{n'=1}^{n(i)-1} ( {m}(n^{-1}(n')) +1 ) 
\text{ for some }k\in\{0,\ldots,m(i)\}
\\
& \text{ and } w_{\alpha_k(i)}(\theta^{\So}_{a_k(i)}(x))
\text{ is the } j_2\text{'th component of } \realiz{ \Phi^{CPwP_p}_{(3)} \sconc_{\relu^2} \Phi^{CPwP_p}_{(4)} },
\\
0 & \text{ else}.
\end{cases} 
\end{align*}
Note that each row has precisely one nonzero element.
The number of rows is bounded by $(d+1) \snorm\calN$,
hence
$\size(\Phi^{CPwP_p}_{(2)}) \leq (d+1) \snorm\calN$.

It follows directly from these definitions that
for all $x\in\R^d$
\begin{align*}
\realiz{ \Phi^{CPwP_p}_{(4)} }(x) 
	= &\, \{ \theta^{\So}_q(x) \}_{q\in\calV}
	,
	\\
\realiz{ \Phi^{CPwP_p}_{(3)} \sconc_{\relu^2} \Phi^{CPwP_p}_{(4)} }(x) 
	= &\, \{ \{ w_\alpha(\theta^{\So}_q(x)) \}_{\alpha=1}^p \}_{q\in\calV}
	,
	\\
\realiz{ \Phi^{CPwP_p}_{(1)} \sconc_{\relu^2} \Phi^{CPwP_p}_{(2)} \sconc_{\relu^2} \Phi^{CPwP_p}_{(3)} \sconc_{\relu^2} \Phi^{CPwP_p}_{(4)} }_i(x) 
	= &\, \prod_{k=0}^{{m}(i)} w_{\alpha_k(i)}( \theta^{\So}_{a_k(i)}(x) ) 
	= \theta^{\Sop{p}}_i(x)
	.
\end{align*}

\textbf{Step 2.}
To obtain an \emph{estimate on the network depth},
we combine the bounds from Step 1 on the depths of the subnetworks
using Propositions \ref{prop:parallSep} and \ref{prop:relutwoconcat},
and get
\begin{align*}
\depth( \Phi^{CPwP_p} )
	\leq &\, \depth( \Phi^{CPwP_p}_{(1)} ) + \depth( \Phi^{CPwP_p}_{(2)} ) + \depth( \Phi^{CPwP_p}_{(3)} ) + \depth( \Phi^{CPwP_p}_{(4)} )
	\\
	\leq &\, c \ceil{\log_2(d+1)} + 1 + C \ceil{\log_2(p+1)} 
		+ \big( 8 + \log_2( \mathfrak{s}(\calV) ) + \log_2(d+1) \big)
	\\
	\leq &\, C \big( 1 + \log_2( \mathfrak{s}(\calV) ) + \log_2(d+1) + \log_2(p+1) \big)
.	
\end{align*}
Similarly, using the same propositions to
combine bounds on the sizes of the subnetworks,
we obtain as
\emph{bound on the network size}
\begin{align*}
\size( \Phi^{CPwP_p} )
	\leq &\, C \size( \Phi^{CPwP_p}_{(1)} ) + C \size( \Phi^{CPwP_p}_{(2)} ) + C \size( \Phi^{CPwP_p}_{(3)} ) 
		+ C \size( \Phi^{CPwP_p}_{(4)} )
	\\
	\leq &\, C (d+1) \snorm\calN + C (d+1) \snorm\calN + C p^2 \snorm\calV
		+ C \Big ( \snorm{\calV} \log_2( \mathfrak{s}(\calV) ) + d^2 \sum_{q\in\calV} s(q) \Big )
	\\
	\leq &\, C \Big( d \snorm\calN + p^2 \snorm\calV 
		+ \snorm{\calV} \log_2( \mathfrak{s}(\calV) ) + d^2 \sum_{q\in\calV} s(q) \Big)
	\\
	\leq &\, C \big( d \snorm\calN + p^2 \snorm\calV 
		+ d^2 \mathfrak{s}(\calV) \snorm\calV \big)
.	
\end{align*}
The subnetwork $\Phi^{CPwP_p}_{(4)} = \Phi^{CPwL}$ only has \ReLU activations
and comprises at most $7 + \log_2( \mathfrak{s}(\calV) ) + \log_2(d+1)$ hidden layers,
which is one less than the total number of layers.
The subnetworks $\Phi^{CPwP_p}_{(1)}$, $\Phi^{CPwP_p}_{(2)}$, $\Phi^{CPwP_p}_{(3)}$ 
and the sparse concatenations have only \ReLUtwo activations.
The number of such hidden layers is at most 
$\depth( \Phi^{CPwP_p}_{(1)} ) 
	+ \depth( \Phi^{CPwP_p}_{(2)} ) 
	+ \depth( \Phi^{CPwP_p}_{(3)} )
	\leq C ( \log_2(d+1) + \log_2(p+1) )$.

\textbf{Step 3.}
For all $v\in\Sop{p}(\calT,\domain)$,
the NN $\Phi^{CPwP_p,v}$ is defined as
$(A',0,\Id_{\R}) \bullet \Phi^{CPwP_p}$,
where $A' \in \R^{1\times \snorm\calN}$ 
is the row vector containing the weights $A'_{1,i} = v_i$ for all $i\in\calN$.
It holds that 
$\depth( \Phi^{CPwP_p,v} ) 
	= 1 - 1 + \depth( \Phi^{CPwP_p} ) 
	= \depth( \Phi^{CPwP_p} )$.

To estimate its size,
we observe that 
the hidden layers of $\Phi^{CPwP_p}$ and $\Phi^{CPwP_p,v}$ coincide,
as these layers emulate the FE basis functions.
The layer dimensions and the lists of activation functions of $\Phi^{CPwP_p}_{(4)}$ 
depend on $\calT$ only through $\{ s(q) \}_{q\in\calV}$ and $\snorm\calV$.
Those of $\Phi^{CPwP_p}_{(3)}$ only depend on $p$ and $\snorm\calV$,
and those of $\Phi^{CPwP_p}_{(1)}$ and $\Phi^{CPwP_p}_{(2)}$
only on $p$, $\snorm\calV$, $\snorm\calN$ and $\{ m(i) \}_{i\in\calN}$.
Furthermore,
each weight in the output layer of $\Phi^{CPwP_p,v}$
is the inner product of the row vector $A'$ 
with a column of the output layer weight matrix of $\Phi^{CPwP_p}$,
hence the number of nonzero weights of $\Phi^{CPwP_p,v}$ 
is at most that of $\Phi^{CPwP_p}$.
The same holds for the number of nonzero biases,
as the bias of $\Phi^{CPwP_p,v}$ 
is the inner product of $A'$ with the bias vector of $\Phi^{CPwP_p}$.
Thus, it follows that 
\[
\size( \Phi^{CPwP_p,v} ) \leq \size( \Phi^{CPwP_p} ).
\]

By definition of $\Phi^{CPwP_p,v}$, 
the realization $\realiz{ \Phi^{CPwP_p,v} }$ 
is a linear combination of the outputs of $\realiz{ \Phi^{CPwP_p} }$,
which is determined uniquely by the weights in the output layer,
$( v_i )_{i\in\calN}$ (which coincide, due to \eqref{eq:hofemrepres}, 
with the nodal values of $v$).
Therefore, 
$\realiz{\cdot}: \calNN(CPwP_p;\calT,\domain)$ $\to \Sop{p}(\calT,\domain)$
is a bijection. 
With the linear operations defined in the proposition,
this map is linear by definition, thus a linear isomorphism.
\end{proof}

\begin{remark}
\label{rem:sizeproptodof}
Using that $\snorm\calV \leq \sum_{q\in\calV} s(q) \leq (d+1) \snorm\calT$
by Rem. \ref{rem:sizebound}
and that $\snorm\calN \simeq p^d \snorm\calT$
with a proportionality constant which depends on $d$,
we obtain that the network size is bounded by 
$C(d) ( \log_2( \mathfrak{s}(\calV) ) + p^d ) \snorm\calT$.

For a fixed triangulation $\calT$, for increasing polynomial degree this scales as $O(p^d)$,
which significantly improves 
the complexity of the networks from \cite[Thm. 4.2]{HX2023},
cf. Rem. \ref{rmk:pbds} above.
Actually, it is of the same order of $p$ as 
the number of degrees of freedom 
of the high-order finite elements which it emulates.

When we do not consider a single, fixed partition, 
but rather a family of regular, simplicial partitions of $\domain$ 
which is uniformly $\kappa$-shape regular,
the value $\mathfrak{s}(\calV)$ is bounded from above in terms of $\kappa$,
according to Rem. \ref{rem:mathfraks}.
Using again that 
$\snorm\calV \leq (d+1) \snorm\calT$
and
$\snorm\calN \simeq p^d \snorm\calT$,
it follows that
\begin{align*}
\size( \Phi^{CPwP_p} )
	\leq &\, C \big( d \snorm\calN + p^2 \snorm\calV 
		+ d^2 \mathfrak{s}(\calV) \snorm\calV \big)
	\leq
		C(d,\kappa) \big( p^d \snorm\calT + p^2 \snorm\calT 
		+ \snorm\calT \big)
	\leq 
		C(d,\kappa) \snorm\calN
	.
\end{align*}
\end{remark}

\begin{remark}
\label{rem:nodalinterpolation}
Because the outputs of $\Phi^{CPwP_p}$
are continuous, piecewise polynomial Lagrangean basis functions
which vanish in all nodes in $\calN$ except one,
it follows that the coefficients $(v_i)_{i\in\calN}$ 
are equal to the function values in $\calN$,
i.e. $v_i = v(i)$ for all $i\in\calN$.
In particular,
the construction of $\Phi^{CPwP_p,v}$ is well suited 
for obtaining NN approximations based on
high order finite element interpolation.
\end{remark}

\begin{remark}
[Univariate case $d=1$]
\label{rem:oned}
Although the proposition is stated for $d\geq 2$,
the NN construction in the proof of Prop. \ref{prop:reluhofem} 
also holds for $d=1$.
However, the proposition does not hold as stated in the univariate case $d=1$,
because the parallel emulation of $p$ univariate polynomials of degree $p$
which is used in the proof
has network size bound $C p^2$, which is not linear in the polynomial degree.

In the univariate case, when $d=1$,
an exact NN emulation with size $\snorm\calN \simeq p \snorm\calT$ 
can be constructed as follows.
In \cite[Appendices A and B]{OSX2024},
for $p\in\N$ 
we constructed a NN with one input and $p$ outputs,
of depth $\ceil{\log_2(p)}+1$ and size at most $C p$,
whose outputs approximate the \Cheb polynomials of degree $1,\ldots,p$.
While \cite[Prop. A.2]{OSX2024} is stated for $\tanh$-NNs,
by \cite[Rem. B.4]{OSX2024} it also holds for \ReLUtwo NNs.
In the proof of \cite[Prop. A.2]{OSX2024},
approximations of the \Cheb polynomials
are constructed using subnetworks,
among which subnetworks that approximate the identity
with a network of fixed size, independently of the accuracy.
Similarly, subnetworks approximating the product of two numbers are used
whose size is fixed and independent of the desired accuracy.
When we replace those by the product and identity networks 
from Prop. \ref{prop:relutwoidnn} and \ref{prop:nnprod},
we obtain a \ReLUtwo NN which exactly emulates the \Cheb polynomials of degrees $1,\ldots,p$
with depth $\ceil{\log_2(p)}+1$ and network size at most $Cp$.
This result can be used in the constructions from \cite{OS2023}
instead of \cite[Prop. 4.8]{OS2023}.
Following the steps in the proofs of 
\cite[Cor. 4.10 and Prop. 4.11]{OS2023}
then gives an exact \ReLUnospace- and \ReLUtwonospace-activated NN emulation of 
univariate continuous, piecewise polynomial functions
on a partition of a bounded interval into $\snorm\calT$ elements
with elementwise polynomial degree $p$,
whose depth is of the order $O(\log(p+1))$
and with a network size of the order $O( p \snorm\calT )$.
\end{remark}

\section{NN Approximation of Weighted Gevrey Regular Functions}
\label{sec:nnhpApprox}
The main result is now obtained 
as a direct consequence of Prop. \ref{prop:eta} and \ref{prop:reluhofem}.

\begin{theorem}
\label{thm:nngevrey}
In a bounded polytope $\domain\subset \mathbb{R}^d$, $d=2,3$,
with plane sides resp. faces,
suppose given a weight vector $\wbeta$ as in~\eqref{eq:beta} 
if $d=3$ or \eqref{eq:beta2} if $d=2$.

Then, 
for all $\delta>0$,
there exist constants
$b,\tilde{b},C>0$
depending on $C_u$, $d_u$ in \eqref{eq:Abeta},
independent of $p$,
such that
for every 
$u\in \calG^\delta_{\wbeta}(\S;\domain)$ 
and $p\in\N$,
there exists a NN $\Phi^{hp,u,p}$
with only \ReLU and \ReLUtwo activation,
which satisfies 
\begin{align}
\depth(\Phi^{hp,u,p})
	\leq &\, C \log_2(p+1)
	,
	\qquad
\size(\Phi^{hp,u,p})
	\leq 
		C {p^{d+1/\delta}}
	,
	\nonumber\\
	\label{eq:nngevrey}
\NN{u - \realiz{\Phi^{hp,u,p}} }_{H^1(\domain)} 
	\leq &\, 
\begin{cases} \;
C\exp(-b p^{1/\delta}) 
\leq C\exp(-\tilde{b} \size(\Phi^{hp,u,p})^{\frac{1}{1+\delta d}} ) 
&\delta\geq 1,
\\
\; 
C\left(\Gamma\left(p^{1/\delta}\right)\right)^{-b(1-\delta)} 
\leq 
C\left(\Gamma\left(\size(\Phi^{hp,u,p})^{\frac{1}{1+\delta d}}\right)\right)^{-\tilde{b}(1-\delta)} 
&0<\delta<1.
\end{cases}
\end{align}
In addition, 
if $u|_{\partial \domain} = 0$, 
then
$\realiz{\Phi^{hp,u,p}} |_{\partial \domain} = 0$.
\end{theorem}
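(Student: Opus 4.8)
The statement is obtained by composing the two earlier results: Prop.~\ref{prop:eta} supplies a continuous, piecewise polynomial $hp$-approximation $v$ of $u$ with exponential error in terms of the number of degrees of freedom $N$, and Prop.~\ref{prop:reluhofem} turns $v$ into a \ReLU/\ReLUtwo network $\Phi^{hp,u,p}$ that emulates $v$ \emph{exactly} with $\size(\Phi^{hp,u,p})\simeq N$ and $\depth(\Phi^{hp,u,p})=O(\log_2(p+1))$. The only genuine work is bookkeeping: converting the degrees-of-freedom count $N$ into the polynomial degree $p$ and into the network size, with constants uniform in $p$. Concretely, first I would fix a $(\kappa,\sigma)$-geometric mesh sequence $\MMM_{\kappa,\sigma}(\S)$ as in Sect.~\ref{sec:GeoMes}, with $\kappa,\sigma$ depending only on $\domain$ and $\S$. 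Since $\calG^\delta_{\wbeta}(\S;\domain)\subset\bigcap_{k\ge0}N^k_{\wbeta}(\domain)\subset N^2_{\wbeta}(\domain)$, Prop.~\ref{prop:eta} applies and yields, with $\ell\simeq p^{1/\delta}$, the function $v:=\Pi^p_{\kappa,\sigma}u\in S^p(\M^{(\ell)})$ satisfying the bound \eqref{eq:eta}, where $N=\dim(S^p(\M^{(\ell)}))\simeq\snorm{\M^{(\ell)}}p^d\simeq\ell p^d\simeq p^{d+1/\delta}$; moreover $v|_{\partial\domain}=0$ whenever $u|_{\partial\domain}=0$.

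\textbf{Exact emulation and complexity bounds.} Next I would apply Prop.~\ref{prop:reluhofem} with $\calT=\M^{(\ell)}$ to this $v$ and set $\Phi^{hp,u,p}:=\Phi^{CPwP_p(\M^{(\ell)},\domain),v}$, so that $\realiz{\Phi^{hp,u,p}}=v$ on $\domain$, the network uses only \ReLU and \ReLUtwo activations, and $\realiz{\Phi^{hp,u,p}}|_{\partial\domain}=v|_{\partial\domain}$, which gives the boundary statement directly. For the complexity I would use that along the fixed geometric mesh family the shape-regularity constant is bounded uniformly in $\ell$, hence by Rem.~\ref{rem:mathfraks} the quantity $\mathfrak{s}(\calV)$ is bounded by a constant depending only on $\kappa$ (thus on $\domain,\S$), and that $d\in\{2,3\}$ is fixed. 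Then Prop.~\ref{prop:reluhofem} gives $\depth(\Phi^{hp,u,p})\leq C(\log_2(p+1)+\log_2(d+1)+\log_2(\mathfrak{s}(\calV))+1)\leq C\log_2(p+1)$. Using $\snorm{\calV}\leq(d+1)\snorm{\M^{(\ell)}}\simeq p^{1/\delta}$, $\snorm{\calN}\simeq p^d\snorm{\M^{(\ell)}}\simeq p^{d+1/\delta}$, and $d\geq2$ (so $p^2\snorm{\calV}\leq Cp^{d+1/\delta}$), the size bound of Prop.~\ref{prop:reluhofem} (cf.\ Rem.~\ref{rem:sizeproptodof}) reads $\size(\Phi^{hp,u,p})\leq C\big(d\snorm{\calN}+p^2\snorm{\calV}+d^2\mathfrak{s}(\calV)\snorm{\calV}\big)\leq Cp^{d+1/\delta}$, and in fact $\size(\Phi^{hp,u,p})\simeq N$.

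\textbf{Recombining the rate.} Finally I would insert $N\simeq p^{d+1/\delta}$ into \eqref{eq:eta}. The elementary identity $(d+1/\delta)/(1+\delta d)=1/\delta$ gives $N^{1/(1+\delta d)}\simeq p^{1/\delta}$, so \eqref{eq:eta} becomes $C\exp(-bp^{1/\delta})$ for $\delta\geq1$ and $C(\Gamma(p^{1/\delta}))^{-b(1-\delta)}$ for $0<\delta<1$, where the proportionality constant is absorbed into $b$ and $C$ using monotonicity of $\exp$ and $\Gamma$. Since also $\size(\Phi^{hp,u,p})\leq C'p^{d+1/\delta}$, one has $\size(\Phi^{hp,u,p})^{1/(1+\delta d)}\leq(C')^{1/(1+\delta d)}p^{1/\delta}$, so choosing $\tilde b$ a fixed fraction of $b$ absorbs $(C')^{1/(1+\delta d)}$ and yields $\exp(-bp^{1/\delta})\leq\exp(-\tilde b\,\size(\Phi^{hp,u,p})^{1/(1+\delta d)})$, and analogously for the $\Gamma$-factor; this is \eqref{eq:nngevrey}. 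All constants depend only on the constants of Prop.~\ref{prop:eta} (i.e.\ $C_u,d_u$ in \eqref{eq:Abeta}) and on $\domain,\S$ through $\kappa,\sigma,\mathfrak{s}(\calV)$, and not on $p$. The only point needing care — and the closest thing to an obstacle — is precisely this $p$-independence: it holds because Prop.~\ref{prop:eta} fixes the geometric mesh family once and for all (so $\kappa,\sigma$, hence $\mathfrak{s}(\calV)$, do not vary with $p$) and lets only the refinement level $\ell\simeq p^{1/\delta}$ grow, so the $p$-dependence of $\size$ enters solely through $\snorm{\calN}\simeq N$, exactly as needed.
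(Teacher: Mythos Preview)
Your proposal is correct and follows essentially the same route as the paper's own proof: fix a $(\kappa,\sigma)$-geometric mesh sequence, apply Prop.~\ref{prop:eta} to obtain $v=\Pi^p_{\kappa,\sigma}u\in S^p(\M^{(\ell)})$ with $\ell\simeq p^{1/\delta}$, then emulate $v$ exactly via Prop.~\ref{prop:reluhofem} and Rem.~\ref{rem:sizeproptodof}, using uniform shape regularity (hence bounded $\mathfrak{s}(\calV)$ by Rem.~\ref{rem:mathfraks}) to absorb all mesh-dependent constants. If anything, your write-up is slightly more explicit than the paper's in spelling out the exponent identity $(d+1/\delta)/(1+\delta d)=1/\delta$ and the passage from $b$ to $\tilde b$.
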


\begin{proof}
For some $0<\sigma<1$ and $\kappa>1$,
consider a sequence $\MMM_{\kappa,\sigma}(\S)$ of nested,
regular simplicial meshes in $\domain$
which are $\sigma$-geometrically refined towards $\S$
and which are $\kappa$-shape regular,
e.g. those constructed in \cite[Alg. 1]{hpfem}.
By Prop. \ref{prop:eta},
for all $p\in\N$,
with $\ell\simeq p^{1/\delta}$,
the continuous, piecewise polynomial function
$v := \Pi^{p}_{\kappa,\sigma} u \in S^p(\M^{(\ell)})$
satisfies
\begin{equation*}
\NN{u - v }_{H^1(\domain)} 
\leq C 
\begin{cases} \;\exp(-bN^{\frac{1}{1+\delta d}}) &\delta\geq 1,
\\
\; 
\left(\Gamma\left(N^{\frac{1}{1+\delta d}}\right)\right)^{-b(1-\delta)} &0<\delta<1,
\end{cases}
\end{equation*}
where
$
N={\rm dim}(S^p(\M^{(\ell)})) \simeq p^{d + {1/\delta}}.
$
If $u|_{\partial \domain} = 0$, 
then $v|_{\partial \domain} = 0$.

Now, by Prop. \ref{prop:reluhofem}
there exists 
$\Phi^{hp,u,p}
	:= \Phi^{CPwP_p(\M^{(\ell)},\domain),v}$,
with only \ReLU and \ReLUtwo activations, 
which satisfies for a constant $C>0$
independent of $\M^{(\ell)}$, $d$ and $p$
\begin{align*}
\realiz{\Phi^{CPwP_p(\M^{(\ell)},\domain),v}}(x)
= &\, v(x)
\quad\text{ for all } x\in\domain
,
\\
\depth(\Phi^{CPwP_p(\M^{(\ell)},\domain),v})
\leq &\, C ( \log_2(p+1) + \log_2(d+1) + \log_2(\mathfrak{s}(\calV)) + 1 )
,
\\
\size(\Phi^{CPwP_p(\M^{(\ell)},\domain),v})
	\leq &\, C \big( d \snorm\calN + p^2 \snorm\calV 
		+ d^2 \mathfrak{s}(\calV) \snorm\calV \big)
	.
\end{align*}
By Rem. \ref{rem:mathfraks},
$\mathfrak{s}(\calV)$ can be bounded in terms of the shape regularity constant $\kappa$,
which is bounded independently of $p$.
Also, $d\in\{2,3\}$ is bounded independently of $p$,
which shows that 
$\depth(\Phi^{hp,u,p}) \leq C \log_2(p+1)$ for $C>0$ depending on $\kappa$.
To estimate the network size,
we recall from Rem. \ref{rem:sizeproptodof} that
$\size(\Phi^{hp,u,p}) 
	= \size(\Phi^{CPwP_p(\M^{(\ell)},\domain),v})
	\leq C \snorm\calN 
	= C N
	\simeq p^{d+1/\delta}$
for a constant $C$ depending on $\kappa$, independent of $p$.
To finish the proof, we substitute this into the error bound.
\end{proof}

\begin{remark}
\label{rem:bc}
The treatment of homogeneous Dirichlet boundary conditions
on a strict subset $\Gamma_{\rm dir}\subset\partial\domain$
comprising the union of several boundary faces (when $d=3$) or edges (when $d=2$)
is the topic of \cite[Sect. 4.2.7]{hpfem}.
Because Prop. \ref{prop:eta} is based on nodal interpolation,
it also holds that 
$u|_{\Gamma_{\rm dir}} = 0$
implies that
$\Pi^p_{\kappa,\sigma} u|_{\Gamma_{\rm dir}} = 0$.
By the presently developed exact NN emulation, this implies that
$\realiz{\Phi^{hp,u,p}} |_{\Gamma_{\rm dir}} = 0$.
\end{remark}

\section{Conclusions}
\label{sec:Concl}
%
For DNNs with a suitable combination of \ReLU and \ReLUtwo activations,
we established exponential expression rate bounds \eqref{eq:nngevrey}
in polytopal domains $\domain \subset \R^d$, in dimension $d=2,3$.
The results built on a general result on DNN emulation of $hp$-FE spaces,
Prop.~\ref{prop:reluhofem}, which is of independent interest,
also allowing for DNN emulation of the so-called $p$-version 
Finite Element Method, and Spectral Element Methods in $\domain$.
Prop.~\ref{prop:reluhofem} improves earlier results from 
\cite{HX2023} by realizing the same finite element
with NNs which are smaller,
i.e. the network size admits smaller bounds in terms of the polynomial degree $p$
(cf. Rem.~\ref{rmk:pbds}).
With Prop.~\ref{prop:reluhofem} in place, exponential expression rate bounds
for the constructed DNNs on countably normed, corner-weighted function classes
\eqref{eq:Abeta} follow from known results on $hp$-FE approximation 
e.g. in \cite{hpfem}, which go back to the $hp$-FE approximation rates
by I.M. Babu\v{s}ka and B.Q. Guo in \cite{GuoDiss85,BabGuoCurved,ApproxhpFE}.
The present results also imply further ``DNN-versions'' of $hp$-FE Methods, 
such as \cite{BMS23-SpcFrLap,BMS23-reactdiff,FMMS23}.

The proposed DNN emulation of Lagrangean basis functions
of the space of polynomials of total degree $p\geq 1$ 
in equispaced nodes 
(in barycentric coordinates in simplicial elements $K\in \calT$)
is, as was shown in the proof of Prop.~\ref{prop:reluhofem},
mathematically exact. 
The well-known conditioning issues of these
nodal basis functions may preclude their 
training in finite-precision arithmetic for very high orders.
Here, the \emph{modal bases} such as those proposed by I.M. Babu\v{s}ka
and coworkers, which are based on antiderivatives of univariate 
Legendre polynomials, will afford better conditioning of the 
resulting ``feature space'', but do not exhibit the separability 
which we used in the proof of Prop.~\ref{prop:reluhofem}. 
Other polynomial bases, both nodal and modal, with substantially
better conditioning for large polynomial orders, are known
(e.g. \cite{ChenIMBPts96,HestPts98}). 
\emph{Separable, nodal bases} are provided by Dubiner's approach \cite{Dubiner},
which also yields bases whose conditioning 
is improved w.r. to the Lagrangean basis \eqref{eq:defwalpha} 
used in the proof of Prop.~\ref{prop:reluhofem}.
The NN size bounds for DNN emulation of these bases is the topic of 
future work.


{\small

}

\end{document}